\newtheorem{theorem}{Theorem}[section]
\newtheorem{definition}[theorem]{Definition}
\newtheorem{proposition}[theorem]{Proposition}
\newtheorem{conjecture}[theorem]{Conjecture}
\begin{document}

\title{Thoma type results for discrete quantum groups}

\author{Teodor Banica}
\address{T.B.: Department of Mathematics, University of Cergy-Pontoise, F-95000 Cergy-Pontoise, France. {\tt teo.banica@gmail.com}}

\author{Alexandru Chirvasitu}
\address{A.C.: Department of Mathematics, Box 354350, University of Washington, Seattle, WA 98195, USA. {\tt chirva@math.washington.edu}}

\begin{abstract}
Thoma's theorem states that a group algebra $C^*(\Gamma)$ is of type I if and only if $\Gamma$ is virtually abelian. We discuss here some similar questions for the quantum groups, our main result stating that, under suitable virtually abelianity conditions on a discrete quantum group $\Gamma$, we have a stationary model of type $\pi:C^*(\Gamma)\to M_F(C(L))$, with $F$ being a finite quantum group, and with $L$ being a compact group. We discuss then some refinements of these results in the quantum permutation group case, $\widehat{\Gamma}\subset S_N^+$, by restricting the attention to the matrix models which are quasi-flat, in the sense that the images of the standard coordinates, known to be projections, have rank $\leq1$.
\end{abstract}

\subjclass[2010]{46L54 (60B15)}
\keywords{Quantum group, Matrix model, Stationarity}

\maketitle

\section*{Introduction}

Thoma's theorem \cite{tho} states that a group algebra $C^*(\Gamma)$ is of type I, in the sense that we have a $C^*$-algebra embedding $\pi:C^*(\Gamma)\subset M_K(C(X))$, with $K<\infty$, and with $X$ being a compact space, if and only if $\Gamma$ is virtually abelian, in the sense that we have a normal abelian subgroup $\Lambda\triangleleft\Gamma$ such that the quotient group $\Phi=\Gamma/\Lambda$ is finite.

This statement is of interest in connection with von Neumann's work on the operator algebras, and more specifically with his reduction theory for such algebras, and with his classification of factors, which are the building blocks of the theory, into three types: I, II, III. Indeed, from von Neumann's point of view, the ``simplest'' operator algebras, and in particular the simplest group algebras, are those of type I. See \cite{mvo}, \cite{von}.

In this paper we discuss a number of refinements, and quantum group extensions, of Thoma's theorem. We use Woronowicz's formalism \cite{wo1}, \cite{wo2}, \cite{wo3}, and our goal is that of finding necessary and sufficient conditions on a compact quantum group $G$ which ensure the existence of a random matrix model of the following type:
$$\pi:C(G)\to M_F(C(L))$$

Here $F$ is a finite quantum group, $L$ is a compact group, and $\pi$ is subject to the ``stationarity'' condition $\int_G=(tr\otimes\int_L)\pi$, where $tr$ is the normalized matrix trace. We restrict the attention to such models because in the group dual case, $G=\widehat{\Gamma}$, this is what comes out, with $F=\widehat{\Phi}$ and $L=\widehat{\Lambda}$, from the virtual abelianity condition on $\Gamma$.

Mathematically speaking, our work is motivated by the above-mentioned von Neumann philosophy, applied to the quantum group setting, and also by the abstract interactions between quantum groups and random matrix theory. Physically speaking, the stationarity condition, coming from the idempotent state work in \cite{fsk}, \cite{wa3}, is expected to correspond to a natural ``stationarity'' condition on the partition function of the associated 2D model, so the study of the stationary models is of particular interest too. We refer to \cite{ban}, \cite{bfr}, \cite{bne} for some recent work on the subject, that the present paper continuates.

Generally speaking, the full quantum group extension of Thoma's theorem appears as a difficult question, and even formulating a conjectural statement is not an easy task, due to the numerous obstructions which appear. We will present here, however, a number of fairly general results on the subject, basically extending everything that is known. We intend to come back to these questions, with finer results, in some future papers.

The paper is organized as follows: in 1-3 we discuss Thoma's theorem and its quantum extensions, and in 4-6 we present a number of more specialized results on the subject.

\medskip

\noindent {\bf Acknowledgements.} A.C. is grateful for partial support from the NSF through grant DMS-1565226.

\section{Thoma's theorem, revisited}

We are interested in the matrix models for the discrete group algebras, which are stationary in the following sense:

\begin{definition}
A matrix model $\pi:C^*(\Gamma)\to M_K(C(X))$ is called stationary when $X$ is a compact probability space, and we have
$$\int_{\widehat{\Gamma}}=\left(tr\otimes\int_X\right)\pi$$
via the identification $M_K(C(X))\simeq M_K(\mathbb C)\otimes C(X)$, where $\int_{\widehat{\Gamma}}:C^*(\Gamma)\to\mathbb C$ is given by $g\to\delta_{g,1}$, and where $tr:M_K(\mathbb C)\to\mathbb C$ is the normalized matrix trace.
\end{definition}

Here $K\in\mathbb N$ is a positive integer, but for simplicity of presentation we agree to use as well finite sets $K$, with the matrix convention $M_K=M_{|K|}$. Later on, we will use as well finite quantum groups $K$, once again with the matrix convention $M_K=M_{|K|}$, where the cardinality of $K$ is given by definition by the following formula:
$$|K|=\dim_\mathbb CC(K)$$

Observe that a stationary model is faithful. Indeed, the stationarity condition gives a factorization $\pi:C^*(\Gamma)\to C^*_{red}(\Gamma)\subset M_K(C(X))$, and since the reduced algebra $C^*_{red}(\Gamma)$ follows to be of type I, $\Gamma$ must be amenable, and so $\pi$ is faithful. See \cite{bfr}.

The main result for the discrete groups, that we would like first to refine, and then to generalize to the quantum group setting, is Thoma's theorem \cite{tho}. This theorem states that a group algebra $C^*(\Gamma)$ is of type I if and only if $\Gamma$ is virtually abelian.

We have the following more detailed version of this theorem, by using the notion of stationarity introduced above:

\begin{theorem}[Thoma]
For a discrete group $\Gamma$, the following are equivalent:
\begin{enumerate}
\item $C^*(\Gamma)$ is of type I, in the sense that we have an embedding $\pi:C^*(\Gamma)\subset M_K(C(X))$, with $X$ being a compact space.

\item $C^*(\Gamma)$ has a stationary model of type $\pi:C^*(\Gamma)\to M_\Phi(C(L))$, with $\Phi$ being a finite group, and $L$ being a compact abelian group.

\item $\Gamma$ is virtually abelian, in the sense that we have an abelian subgroup $\Lambda\triangleleft\Gamma$ such that the quotient group $\Phi=\Gamma/\Lambda$ is finite. 

\item $\Gamma$ has an abelian subgroup $\Lambda\subset\Gamma$ whose index $K=[\Gamma:\Lambda]$ is finite.
\end{enumerate}
\end{theorem}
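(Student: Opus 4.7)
The plan is to establish the cycle $(3)\Leftrightarrow(4)\Rightarrow(2)\Rightarrow(1)\Rightarrow(3)$. Of these, $(1)\Rightarrow(3)$ is the nontrivial direction of Thoma's classical theorem \cite{tho}, which I would quote rather than reprove; the remaining three implications are what must be supplied in this enhanced formulation.

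The easy parts are $(3)\Leftrightarrow(4)$ and $(2)\Rightarrow(1)$. The direction $(3)\Rightarrow(4)$ is immediate. For $(4)\Rightarrow(3)$, given an abelian $\Lambda\subset\Gamma$ of finite index, I would pass to the normal core $\Lambda_0=\bigcap_{g\in\Gamma}g\Lambda g^{-1}$: only finitely many distinct conjugates occur (their number is bounded by $[\Gamma:\Lambda]$), so this is a finite intersection of finite-index subgroups and hence still of finite index by Poincar\'e, while normality is automatic and abelianity is inherited from $\Lambda_0\subset\Lambda$. For $(2)\Rightarrow(1)$ there is nothing to prove, as the remark just after Definition 1.1 already records that a stationary model is automatically faithful, so the model itself provides the required embedding into $M_K(C(X))$.

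The substance is in $(3)\Rightarrow(2)$. Start from $\Lambda\triangleleft\Gamma$ normal abelian with finite quotient $\Phi=\Gamma/\Lambda$; set $L=\widehat{\Lambda}$, a compact abelian group, and identify $C^*(\Lambda)\cong C(L)$ via Pontryagin duality, sending $\lambda\in\Lambda$ to its character $\widehat{\lambda}\in C(L)$. Fix a set-theoretic section $s:\Phi\to\Gamma$ with $s(1)=1$, let $c(\phi,\psi)=s(\phi\psi)^{-1}s(\phi)s(\psi)\in\Lambda$ be the associated cocycle, and let $\phi\cdot\lambda=s(\phi)\lambda s(\phi)^{-1}$ denote the induced action of $\Phi$ on $\Lambda$ (well-defined on $\Phi$ because $\Lambda$ is abelian). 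Each $g\in\Gamma$ has a unique factorization $g=s(\phi)\lambda$; define
$$\pi(g)=\sum_{\psi\in\Phi}E_{\phi\psi,\psi}\otimes\widehat{c(\phi,\psi)\cdot(\psi^{-1}\cdot\lambda)}\in M_\Phi(C(L)),$$
where $E_{a,b}$ are the standard matrix units. Conceptually this is the left regular representation of $\Gamma$ on $\ell^2(\Gamma)\cong\ell^2(\Phi)\otimes L^2(L)$, repackaged to exploit the fact that $\Lambda$ acts by multiplication operators after Fourier transform; the check that $\pi$ is a $*$-homomorphism reduces to the standard $2$-cocycle identity for $c$. For stationarity, $(tr\otimes\int_L)\pi(g)$ retains only the matrix-diagonal terms, forcing $\phi\psi=\psi$, so $\phi=1$ and $g=\lambda\in\Lambda$; integration of $\widehat{\psi^{-1}\cdot\lambda}$ against Haar measure on $L=\widehat{\Lambda}$ yields $\delta_{\psi^{-1}\cdot\lambda,1}=\delta_{\lambda,1}$, and the whole expression collapses to $\delta_{g,1}$, as required. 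The real obstacle here is conceptual: recognizing that the model demanded by $(2)$ must come from realizing $C^*(\Gamma)$ as a crossed product $C(L)\rtimes\Phi$ and reading off its regular representation; once that picture is in place, the cocycle and Plancherel computations are routine.
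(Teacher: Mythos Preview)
Your proof is correct and follows the same cycle of implications as the paper; the only difference is presentational. For $(3)\Rightarrow(2)$ the paper packages the model as $\pi(g)(\chi)=\mathrm{Ind}_\Lambda^\Gamma(\chi)(g)$ and verifies stationarity via the Frobenius character formula, whereas you write out that same induced representation explicitly with a section and $2$-cocycle (equivalently, the regular representation of the crossed product $C(L)\rtimes\Phi$) and compute the diagonal trace directly---but the map into $M_\Phi(C(\widehat\Lambda))$ and the trace computation are literally identical.
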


\begin{proof}
There are several proofs for this fact, the idea being as follows:

$(1)\implies(4)$ This is the non-trivial implication, see \cite{kan}, \cite{smi}, \cite{tho}.

$(4)\implies(3)$ We choose coset representatives $g_i\in\Gamma$, and we set:
$$\Lambda'=\bigcap_ig_i\Gamma g_i^{-1}$$

Then $\Lambda'\subset\Lambda$ has finite index, and we have $\Lambda'\triangleleft\Gamma$, as desired.

$(3)\implies(2)$ This follows by using the theory of induced representations. We can define a model $\pi:C^*(\Gamma)\to M_\Phi(C(\widehat{\Lambda}))$ by setting:
$$\pi(g)(\chi)=Ind_\Lambda^\Gamma(\chi)(g)$$

Indeed, any character $\chi\in\widehat{\Lambda}$ is a 1-dimensional representation of $\Lambda$, and we can therefore consider the induced representation $Ind_\Lambda^\Gamma(\chi)$ of the group $\Gamma$. This representation is $|\Phi|$-dimensional, and so maps the group elements $g\in\Gamma$ into order $|\Phi|$ matrices $Ind_\Lambda^\Gamma(\chi)(g)$. Thus $\pi$ is well-defined, and the fact that it is a representation is clear as well.

In order to check now the stationarity property, we use the following well-known character formula, due to Frobenius:
$$Tr\left(Ind_\Lambda^\Gamma(\chi)(g)\right)=\sum_{x\in\Phi}\delta_{x^{-1}gx\in\Lambda}\chi(x^{-1}gx)$$

By integrating with respect to $\chi\in\widehat{\Lambda}$, we deduce from this that we have:
$$ \left(Tr\otimes \int_{\widehat{\Lambda}}\right)\pi(g)=\sum_{x\in\Phi}\delta_{x^{-1}gx\in \Lambda}\int_{\widehat{\Lambda}}\chi(x^{-1}gx)d\chi=\sum_{x\in\Phi}\delta_{x^{-1}gx\in \Lambda}\delta_{g,1}=|\Phi|\cdot\delta_{g,1}$$

Now by dividing by $|\Phi|$ we conclude that the model is stationary, as claimed.

$(2)\implies(1)$ This is the trivial implication, with the faithfulness of $\pi$ following from the abstract functional analysis arguments explained after Definition 1.1.
\end{proof}

Summarizing, we have a good understanding of the ``virtually abelian implies type I'' part of Thoma's theorem, corresponding to the proof of $(3)\implies(2)$ above. In what follows we will be mainly interested in refining and extending this result. 

\section{Quantum groups}

We present in what follows a number of quantum group extensions of the above results. Generally speaking, we are in need here of some induced representation machinery for the discrete quantum groups. Some results in this direction are already available from \cite{vvo}, but for the purposes of the present paper, where we will make a heavy use of exact sequences in the $*$-algebraic setting, we will rather use \cite{ade}, \cite{mon} as main ingredients.

We use the formalism of compact and discrete quantum groups due to Woronowicz \cite{wo3}, under the supplementary assumption $S^2=id$. The axioms here are as follows:

\begin{definition}
Assume that we have a unital $C^*$-algebra $A$, together with a morphism  of $C^*$-algebras $\Delta:A\to A\otimes A$, such that the following conditions are satisfied:
\begin{enumerate}
\item $\Delta$ is coassociative: $(id\otimes\Delta)\Delta=(\Delta\otimes id)\Delta$.

\item $A$ is left and right simplifiable, with respect to $\Delta$.

\item The associated antipode map $S$ satisfies $S^2=id$.
\end{enumerate}
We write then $A=C(G)=C^*(\Gamma)$, and call $G,\Gamma$ compact and discrete quantum groups.
\end{definition}

Observe that if $G$ is a compact group then $A=C(G)$ satisfies the axioms, and that if $\Gamma$ is a discrete group then $A=C^*(\Gamma)$ satisfies the axioms. Together with a number of other observations and results, including the Pontrjagin duality formulae $G=\widehat{\Gamma},\Gamma=\widehat{G}$, valid in the abelian case, this justifies the above axioms, and terminology. See \cite{wo3}.

For the purposes of the present paper, the above formalism is exactly what we need, and we will formulate everything in terms of compact quantum groups.

Let us begin our study with an extension of Definition 1.1, as follows:

\begin{definition}
Given a compact quantum group $G$, a matrix model $\pi:C(G)\to M_K(C(X))$ is called stationary when $X$ is a compact probability space, and
$$\int_G=\left(tr\otimes\int_X\right)\pi$$
via the standard identification $M_K(C(X))\simeq M_K(\mathbb C)\otimes C(X)$.
\end{definition}

Here $\int_G$ is the Haar integration over $G$, constructed by Woronowicz in \cite{wo3}.

Observe that in the group dual case, $G=\widehat{\Gamma}$ with $\Gamma$ classical, we recover Definition 1.1 above. As in the group dual case, the stationarity implies the faithfulness, by using some standard amenability theory from \cite{bmt}. For a discussion of the stationarity property, and of related matrix modelling questions, we refer to \cite{ban}, \cite{bb2}, \cite{bfr}, \cite{chi}.

In order to extend Thoma's theorem, let us begin with some algebraic preliminaries. It is convenient to restrict our attention to the dense Peter-Weyl type Hopf $*$-algebras $\mathcal{R}(G)\subset C(G)$ associated to our compact quantum groups $G$, as in \cite{wo3}. 

Following \cite{ade}, we have the following key definition: 

\begin{definition}
An exact sequence of compact quantum groups $1\to F\to G\to L\to1$ corresponds by definition to a sequence of Hopf $*$-algebras
$$\mathbb C\to \mathcal{R}(L)\stackrel{\iota}{\to} \mathcal{R}(G)\stackrel{\psi}{\to} \mathcal{R}(F)\to\mathbb C$$
which is exact, in the following sense:
\begin{enumerate}
\item $\iota$ is injective, $\psi$ is surjective.

\item $\psi\circ\iota$ equals the composition of the counit of $\mathcal{R}(L)$ and the unit of $\mathcal{R}(F)$.

\item $\mathrm{ker}(\psi)=\mathcal{R}(G)\mathcal{R}(L)^+$, where the $+$ superscript denotes the kernel of the counit.

\item $Im(\iota)=\{x\in\mathcal{R}(G)|(\psi\otimes 1)\Delta(x)=1\otimes x\}$.
\end{enumerate}
\end{definition}

We will need as well to consider the more general situation where we have a quotient $G\to L$, without necessarily having an exact sequence as above. Such a quotient $G\to L$ produces at the dual level an inclusion $\widehat{L}\subset\widehat{G}$, and with some inspiration from the condition (3) above, we can define the ``index'' of this inclusion as follows:
$$[\widehat{G}:\widehat{L}]=\dim_\mathbb C\left[\mathcal{R}(G)\big/\mathcal{R}(G)\mathcal{R}(L)^+\right]$$

To be more precise, the quotient operation on the right is a coalgebra quotient. Observe that in the case where we have an exact sequence, as in Definition 2.3, the index is simply the quantity $|F|=\dim_\mathbb C\mathcal R(F)$. In general, the above quantity $[\widehat{G}:\widehat{L}]$, that we will also call ``co-index'' of the quotient $G\to L$, corresponds to what we would like to expect from an index for discrete quantum groups. For more on these topics, we refer to \cite{ade}.

Following now \cite{mon}, and more precisely Definition 7.2.1 there, we have:

\begin{definition}
An exact sequence $1\to F\to G\to L\to 1$ is called ``cleft'' if the extension $\mathcal{R}(L)\subset \mathcal{R}(G)$ is cleft with respect to the right coaction of $\mathcal{R}(F)$ on $\mathcal{R}(G)$, in the sense that there exists a convolution-invertible right $\mathcal{R}(F)$-comodule map $\mathcal{R}(F)\to \mathcal{R}(G)$.
\end{definition}

The idea with the cleftness condition is that this is a natural and technically useful generalization of the ``trivial'' situation, where the short exact sequence is split. For more theory regarding the short exact sequences, we refer to \cite{ade}, \cite{chk}, \cite{cdp}, \cite{kta}, \cite{mon}.

With these notions in hand, we can explore the various notions of virtual abelianity which can be imposed on a discrete quantum group $\Gamma=\widehat{G}$. We first have the following result, which is a trivial consequence of the above definitions:

\begin{proposition}
Consider the following notions of virtual coabelianity, concerning a compact quantum group $G$:
\begin{enumerate}
\item We have a finite co-index quotient $G\to L$, with $L$ classical.

\item We have an exact sequence $1\!\to\!F\!\to\!G\!\to\!L'\!\to\!1$ with $F$ finite, and $L\!\to\!L'$ classical.

\item We have an exact sequence $1\to F\to G\to L\to1$, with $F$ finite, $L$ classical.

\item We have a cleft sequence $1\to F\to G\to L\to1$, with $F$ finite, $L$ classical.
\end{enumerate}
We have then $(4)\implies(3)\implies(2)\implies(1)$. In addition, in the group dual case, the implications $(1)\implies(2)$ and $(3)\implies(4)$ hold as well. 
\end{proposition}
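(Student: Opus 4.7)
These implications should be formal consequences of the definitions. The implication $(4)\Rightarrow(3)$ is immediate from Definition 2.5: a cleft sequence is by construction an exact sequence equipped with the extra datum of a convolution-invertible $\mathcal{R}(F)$-comodule section, and one simply forgets this section. For $(3)\Rightarrow(2)$ I would take $L'=L$, with the identity as the classical map $L\to L'$. For $(2)\Rightarrow(1)$ I would extract the quotient $G\to L'$ from the exact sequence; by the remark following Definition 2.3 its co-index equals $\dim_{\mathbb C}\mathcal{R}(F)=|F|$, which is finite, and combining this with the classical link between $L$ and $L'$ produces the finite co-index classical quotient required by (1).

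\textbf{Reverse $(1)\Rightarrow(2)$ in the group dual case.} Assume $G=\widehat{\Gamma}$, so $\mathcal{R}(G)=\mathbb C[\Gamma]$ is cocommutative. A classical quotient $G\to L$ corresponds to a Hopf $*$-subalgebra $\mathcal{R}(L)\hookrightarrow\mathbb C[\Gamma]$ which is commutative (since $L$ is classical) and also cocommutative (being a subcoalgebra of a cocommutative coalgebra). Such an algebra must be of the form $\mathbb C[\Lambda]$ for an abelian subgroup $\Lambda\subset\Gamma$, recovered as the set of its group-like elements. A direct computation identifies the co-index $\dim_{\mathbb C}\mathbb C[\Gamma]/\mathbb C[\Gamma]\mathbb C[\Lambda]^+$ with the classical index $[\Gamma:\Lambda]$, so $\Lambda$ has finite index. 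Then I would apply the coset trick from the proof of Theorem 1.2: the normal core $\Lambda'=\bigcap_i g_i\Lambda g_i^{-1}$ remains abelian and of finite index, and is now normal, so dualizing the classical short exact sequence $1\to\Lambda'\to\Gamma\to\Phi\to1$ yields a Hopf-algebra exact sequence $\mathbb C[\Lambda']\hookrightarrow\mathbb C[\Gamma]\twoheadrightarrow\mathbb C[\Phi]$, i.e.\ a quantum-group exact sequence $1\to\widehat{\Phi}\to\widehat{\Gamma}\to\widehat{\Lambda'}\to1$ with $\widehat{\Phi}$ finite and $\widehat{\Lambda'}$ classical. The needed classical map is the surjection $\widehat{\Lambda}\twoheadrightarrow\widehat{\Lambda'}$ dual to the inclusion $\Lambda'\hookrightarrow\Lambda$.

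\textbf{Reverse $(3)\Rightarrow(4)$ in the group dual case, and the main obstacle.} Writing the exact sequence in the group dual case as $1\to\Lambda\to\Gamma\to\Phi\to1$ with $\Lambda$ abelian normal and $\Phi$ finite, I would choose set-theoretic coset representatives $g_\phi\in\Gamma$ for each $\phi\in\Phi$ and define $\sigma:\mathbb C[\Phi]\to\mathbb C[\Gamma]$ by $\sigma(\phi)=g_\phi$, extending linearly. The right $\mathbb C[\Phi]$-coaction on $\mathbb C[\Gamma]$ sends $g\mapsto g\otimes\bar g$, so the comodule axiom $(\sigma\otimes\mathrm{id})\circ\Delta=\rho\circ\sigma$ reduces on group-likes to the tautology $g_\phi\otimes\phi=g_\phi\otimes\phi$, and convolution-invertibility is witnessed by $\phi\mapsto g_\phi^{-1}$. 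The main obstacle, I expect, is interpretive rather than computational: one must correctly parse the hybrid condition in (2) relating $L$ and $L'$, and check that the axiomatic Hopf-algebra exact sequence of Definition 2.3 is produced automatically by the classical group extension $1\to\Lambda'\to\Gamma\to\Phi\to1$ upon dualization. Once those identifications are made, each of the six implications collapses to a short verification.
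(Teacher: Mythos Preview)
Your proposal is correct and follows essentially the same approach as the paper. The paper's own proof is extremely terse: it declares the forward chain ``trivial'', identifies $(1)\Rightarrow(2)$ in the group dual case with the normal-core argument of Theorem~1.2 $(4)\Rightarrow(3)$, and for $(3)\Rightarrow(4)$ simply remarks that the required comodule map $\mathcal{R}(F)\to\mathcal{R}(G)$ can be taken to be a set-theoretic section of the quotient $\widehat{G}\to\widehat{F}$---which is exactly your coset-representative construction $\phi\mapsto g_\phi$. Your write-up fills in the details the paper omits (the identification of the classical quotient with an abelian subgroup via group-likes, the co-index computation, the explicit comodule and convolution-invertibility checks), but the underlying ideas coincide step for step.
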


\begin{proof}
The implications $(4)\implies(3)\implies(2)\implies(1)$ are trivial. As for the group dual case, $(1)\implies(2)$ is essentially the implication $(4)\implies(3)$ from Theorem 1.2 above, and $(3)\implies(4)$ follows from the fact that the comodule morphism $\mathcal{R}(F)\to \mathcal{R}(G)$ that we need can be chosen to be a section of the quotient map $\widehat{G}\to \widehat{F}$.
\end{proof}

In connection now with Thoma's theorem, or rather with its constructive implication, ``virtually abelian implies type I'', we first have the following result:

\begin{proposition}
Assuming that $G$ has a classical finite co-index quotient $G\to L$ fitting into an exact sequence as in Definition 2.3, we have a faithful representation
$$\pi:\mathcal{R}(G)\to End_{\mathcal{R}(L)}\mathcal{R}(G)$$
which in addition commutes with the integration functionals, in the sense that
$$\int_G=\int_L\circ\;tr\circ\pi$$
where $tr:End_{\mathcal{R}(L)}\mathcal{R}(G)\to \mathcal{R}(L)$ is the canonical splitting of $\mathcal{R}(L)\subset End_{\mathcal{R}(L)}\mathcal{R}(G)$.
\end{proposition}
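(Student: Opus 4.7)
The plan is to take for $\pi$ the left regular representation of $\mathcal{R}(G)$ on itself, viewed as a right $\mathcal{R}(L)$-module: $\pi(a)(b)=ab$. By associativity of the product, each $\pi(a)$ commutes with right multiplication by $\mathcal{R}(L)$, so indeed $\pi(a)\in End_{\mathcal{R}(L)}\mathcal{R}(G)$; $\pi$ is obviously multiplicative, and faithfulness is free of charge, since $\pi(a)(1)=a$.

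Next, the exact sequence $1\to F\to G\to L\to 1$ makes $\mathcal{R}(L)\subset\mathcal{R}(G)$ a faithfully flat Hopf-Galois extension, and by standard Hopf-algebraic results (see \cite{ade}, \cite{mon}) $\mathcal{R}(G)$ is a free right $\mathcal{R}(L)$-module of rank $N:=|F|$. Fixing any such basis $e_1,\ldots,e_N$ gives an isomorphism $End_{\mathcal{R}(L)}\mathcal{R}(G)\simeq M_N(\mathcal{R}(L))$, under which $\mathcal{R}(L)$ embeds as the scalar operators, and the canonical splitting $tr$ of the statement is then nothing but the normalized matrix trace. Concretely, writing $\pi(a)e_j=\sum_i e_i a_{ij}$ with $a_{ij}\in \mathcal{R}(L)$, we have $tr(\pi(a))=\frac{1}{N}\sum_i a_{ii}$.

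For the integration identity, I would introduce the $\mathcal{R}(F)$-coaction $\rho:\mathcal{R}(G)\to\mathcal{R}(G)\otimes\mathcal{R}(F)$ produced by the exact sequence together with the associated conditional expectation $E:=(id\otimes\int_F)\rho$, which by condition (4) of Definition 2.3 lands in $\mathcal{R}(L)$. Standard Hopf-algebraic computations (see \cite{ade}) give $\int_G=\int_L\circ E$, so the proposition reduces to the equality $E=tr\circ\pi$, which one verifies by choosing the basis $\{e_i\}$ compatibly with the coaction $\rho$ and matching the averaging formula for $E$ against the trace of the regular representation.

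The main technical obstacle is precisely this last identification $E=tr\circ\pi$. In the group-dual case $\Gamma=\Lambda\rtimes\Phi$ it is exactly the Frobenius character computation performed in $(3)\Rightarrow(2)$ of Theorem 1.2, with the averaging over $\Phi$ reappearing as the normalized matrix trace. In the general quantum setting, it requires a basis of $\mathcal{R}(G)$ over $\mathcal{R}(L)$ adapted to the coaction $\rho$, which is automatic when the extension is cleft and which can be extracted in our level of generality from the Hopf-Galois data encoded in Definition 2.3.
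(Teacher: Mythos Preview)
Your $\pi$ is the same as the paper's --- left multiplication --- and your outline of $\int_G=\int_L\circ E$ with $E=(id\otimes\int_F)\rho$ is fine. But two steps do not go through.

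First, you assert that $\mathcal{R}(G)$ is \emph{free} of rank $|F|$ over $\mathcal{R}(L)$. In the generality of this proposition that is not available: the Hopf--Galois machinery (Schneider's theorem together with \cite[Thm.~8.3.3]{mon}) gives only that $\mathcal{R}(G)$ is \emph{finitely generated projective} over $\mathcal{R}(L)$. Freeness is precisely what the cleft hypothesis buys, and the paper defers it to Theorem~3.1. So your identification $End_{\mathcal{R}(L)}\mathcal{R}(G)\simeq M_{|F|}(\mathcal{R}(L))$ and the basis-dependent description of $tr$ are not justified here. The paper's proof avoids this by working with the dual-basis isomorphism
\[
\mathcal{R}(G)\otimes_{\mathcal{R}(L)}Hom_{\mathcal{R}(L)}(\mathcal{R}(G),\mathcal{R}(L))\;\simeq\;End_{\mathcal{R}(L)}\mathcal{R}(G)
\]
and computing the trace through the Galois map and the integral element $\int_{\widehat{F}}\in\mathcal{R}(F)$.

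Second, and more seriously, the reduction to $E=tr\circ\pi$ is false already in the classical group-dual case. With $\Lambda\triangleleft\Gamma$ normal abelian, $\Phi=\Gamma/\Lambda$, and coset representatives $g_1,\ldots,g_N$ as right $\mathbb C[\Lambda]$-basis, one finds for $g\in\Lambda$ that
\[
E(g)=g,\qquad tr(\pi(g))=\frac{1}{N}\sum_{j=1}^N g_j^{-1}g\,g_j,
\]
and these differ whenever $g$ is not central in $\Gamma$. What \emph{is} true is that they agree after applying $\int_L$, since $\int_L(\lambda)=\delta_{\lambda,1}$ annihilates the discrepancy. So the identity you need is $\int_L\circ E=\int_L\circ tr\circ\pi$, not $E=tr\circ\pi$, and establishing the former genuinely requires the explicit trace computation the paper carries out via the element $a_i\otimes b_i$ mapping to $1\otimes\int_{\widehat{F}}$ under the Galois isomorphism --- it does not reduce to ``choosing a basis compatibly with $\rho$''.
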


\begin{proof}
By Definition 2.3 we have an exact sequence, as follows:
$$\mathbb C\to \mathcal{R}(L)\stackrel{\iota}{\to} \mathcal{R}(G)\stackrel{\psi}{\to} \mathcal{R}(F)\to\mathbb C$$

We now regard $\mathcal{R}(G)$ as a right $\mathcal{R}(F)$-comodule algebra via the following coaction:
$$(1\otimes\psi)\circ \Delta: \mathcal{R}(G)\to \mathcal{R}(G)\otimes \mathcal{R}(F)$$

Since $\mathcal{R}(F)$ is cosemisimple, all of its comodules are left and right coflat. By \cite{tak} the functor $-\otimes_{\mathcal{R}(L)}\mathcal{R}(G)$ is an equivalence between the category of right $\mathcal{R}(L)$-modules and the category $\mathcal{M}_{\mathcal{R}(G)}^{\mathcal{R}(F)}$ of $\mathcal{R}(G)$-modules $M$ equipped with a right $\mathcal{R}(F)$-comodule structure for which the module structure map $M\otimes\mathcal{R}(G)\to M$ is one of $\mathcal{R}(F)$-comodules.

By Theorem I of \cite{sch}, the extension $\mathcal{R}(L)\subset \mathcal{R}(G)$ is Galois over $\mathcal{R}(F)$ in the sense of Definition 8.1.1 in \cite{mon}. Now Theorem 8.3.3 in \cite{mon} implies that $\mathcal R(G)$ is projective finitely generated as a $\mathcal R(L)$-module, and so we have an isomorphism, as follows:
$$\Xi:\mathcal R(G)\otimes_{\mathcal R(L)} Hom_{\mathcal R(L)}(\mathcal R(G),\mathcal R(L))\simeq End_{\mathcal R(L)}(\mathcal R(G))$$

With this picture in mind, we define $\pi$ to be the map identifying $\mathcal R(G)$ with the space of $\mathcal R(L)$-module maps $\mathcal R(G)\to\mathcal R(G)$ given by left multiplication by elements in $\mathcal R(G)$.

In order to prove the last assertion, we need an explicit description of the normalized trace. To this end we paraphrase the discussion in the proof of Theorem 8.3.1 in \cite{mon}, as follows. Consider the following diagram, well-known to commute:
$$\xymatrix@R=35pt@C=25pt
{\mathcal R(G)\otimes\mathcal R(G)\otimes\mathcal R(G)\ar[rr]^{m\otimes id}&&\mathcal R(G)\otimes\mathcal R(G)\ar[d]^{id\otimes p}\\
\mathcal R(G)\otimes\mathcal R(G)\ar[u]^{id\otimes\Delta}\ar[r]&\mathcal R(G)\otimes_{\mathcal R(L)}\mathcal R(G)\ar[r]_\simeq&\mathcal R(G)\otimes \mathcal R(F)}$$  

Let $\int_{\widehat{F}}\in \mathcal{R}(F)$ be the unique integral with $\varepsilon\left(\int_{\widehat{F}}\right)=1$, i.e. the Haar state of the compact quantum group $\widehat{F}$. With the convention that we sum over repeated indices, consider the unique element $a_i\otimes b_i\in\mathcal R(G)\otimes_{\mathcal R(L)}\mathcal R(G)$ which gets mapped to $1\otimes\int_{\widehat{F}}\in\mathcal R(G)\otimes\mathcal R(F)$ by the two equal bottom-left-to-bottom-right maps in the above diagram.

Unwinding the proof of Theorem 8.3.1 in \cite{mon}, the element 
corresponding to the identity through the isomorphism $\Xi$ constructed above is $a_i\otimes\phi_i$, where the elements $\phi_i\in Hom(\mathcal{R}(G)\to \mathcal{R}(L))$ are defined by the following formula: 
$$\phi(y) = b_{i1}y_{i1}\int_F\psi(b_{i2}y_{i2})$$

In short, we have identified the left multiplication by $x\in\mathcal R(G)$ with: 
$$xa_i\otimes\phi_i\in\mathcal R(G)\otimes_{\mathcal R(L)}Hom(\mathcal R(G),\mathcal R(L))$$

We conclude from this that we have the following equality:
$$tr(\pi(x))=\phi_i(xa_i)=b_{i1}x_{i1}a_{i1}\int_F\psi(b_{i2}x_{i2}a_{i2})$$

Applying now $\int_L$ and setting $y=b_ixa_i$, we obtain:
\begin{eqnarray*}
\int_L tr(\pi(x))
&=&\int_Ly_1\int_F\psi(y_2)=\int_Gy=\int_Gb_ixa_i=\int_Gxa_ib_i\\
&=&\left(\int_G\otimes\varepsilon\right)\left(xa_ib_{i1}\otimes b_{i2}\right)=\left(\int_G\otimes\varepsilon\right)\left[x\left(1\otimes\int_{\widehat{F}}\right)\right]=\int_Gx
\end{eqnarray*}

We conclude that we have $\int_G=\int_L\circ tr\circ \pi$, as desired.
\end{proof}

In order to examine now how the $*$-structures enter the discussion, consider the projection $E:\mathcal{R}(G)\to \mathcal{R}(L)$ obtained by annihilating the summands of the domain that do not appear in the codomain under the Peter-Weyl decomposition:
$$\mathcal{R}(L)=\bigoplus_{V\in \mathrm{Irr}(L)}V^*\otimes V\quad,\quad \mathcal{R}(G)=\bigoplus_{W\in \mathrm{Irr}(G)} W^*\otimes W$$

This projection $E$ extends to a projection of $C(G)\to C(L)$, and this in turn realizes the former as a Hilbert module over the latter, with the following pairing:
$$\langle x|y\rangle = E(x^*y)\in C(L),\ \forall x,y\in C(G)$$ 

Summarizing, we have a $*$-structure on the adjointable operators on $C(G)$ respecting the right $C(L)$-module structure that restricts to the codomain $End_{\mathcal{R}(L)}\mathcal{R}(G)$ of the morphism $\pi$ in Proposition 2.6. Since $\mathcal{R}(G)$ is realized as an algebra of such operators by left multiplication, the $\pi$ is easily seen to respect the $*$-structures.

With all of this in place, we can now lift the above results to the $C^*$-setting:

\begin{theorem}
Under the hypotheses of Proposition 2.6 the discrete quantum dual $\widehat{G}$ is amenable, and we have a stationary embedding of $C(G)$ into the $C^*$-algebra $End_{C(L)}C(G)$ of adjointable operators of the Hilbert $C(L)$-module $C(G)$. 
\end{theorem}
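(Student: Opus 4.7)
The plan is to upgrade the $*$-algebraic statement of Proposition 2.6 to the $C^*$-algebraic setting by completing the Hilbert $\mathcal{R}(L)$-module $\mathcal{R}(G)$, and then to deduce amenability of $\widehat{G}$ from the nuclearity of the target. By the discussion preceding the theorem, $\pi$ sends $\mathcal{R}(G)$ to left multiplication operators inside the $*$-algebra of adjointable operators for the inner product $\langle x|y\rangle=E(x^*y)$. Since for $x,y\in\mathcal{R}(G)$ the operator inequality $x^*x\le\|x\|_{C(G)}^2\cdot 1$ and positivity of $E$ yield
$$\langle xy|xy\rangle=E(y^*x^*xy)\le\|x\|_{C(G)}^2\,E(y^*y)=\|x\|_{C(G)}^2\langle y|y\rangle$$
in $C(L)$, each such left multiplication extends to a bounded adjointable operator on the Hilbert $C(L)$-module completion $C(G)$, of norm at most $\|x\|_{C(G)}$. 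The universal property of $C(G)$ then provides a continuous $*$-homomorphism $\pi:C(G)\to End_{C(L)}C(G)$.

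Next I would extend the identity $\int_G=\int_L\circ tr\circ\pi$ of Proposition 2.6 from $\mathcal{R}(G)$ to $C(G)$. The trace is given by the explicit finite-sum formula $tr(T)=\phi_i(Ta_i)$ appearing in that proof, whose ingredients $a_i\in\mathcal{R}(G)$ and $\phi_i\in Hom_{\mathcal{R}(L)}(\mathcal{R}(G),\mathcal{R}(L))$ extend to the $C^*$-completions, so $tr$ becomes a bounded $C(L)$-linear map $End_{C(L)}C(G)\to C(L)$ and the stationarity equality passes to all of $C(G)$ by density. This already settles the stationarity content of the theorem.

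For amenability and the injectivity of $\pi$, observe that $C(L)$ is commutative and hence nuclear, and that by Proposition 2.6 $\mathcal{R}(G)$ is a finitely generated projective $\mathcal{R}(L)$-module; consequently $End_{C(L)}C(G)$ is a direct summand of $M_N(C(L))$ for some $N$, and is therefore nuclear. On the other hand, the identity $\int_L\circ tr\circ\pi=\int_G$ identifies the Haar state as a faithful trace on $\pi(C(G))$, so $\pi$ must factor through the reduced algebra of $G$ as an embedding. This displays the reduced $C^*$-algebra of $G$ as a subalgebra of a nuclear $C^*$-algebra carrying a compatible faithful trace, and by the standard amenability theory for compact quantum groups of Kac type \cite{bmt} this forces $\widehat{G}$ to be amenable. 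Consequently $C(G)$ coincides with its reduced version, and $\pi$ is an embedding of $C(G)$ itself.

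The main technical obstacle I anticipate is the transfer of finite projectivity and of the explicit trace formula from the dense $*$-subalgebra $\mathcal{R}(G)$ to the $C^*$-completion $C(G)$: one must check that $\mathcal{R}(G)$ is norm-dense in $C(G)$ with respect to the Hilbert-module norm coming from $E$, and that the dual frame $\{a_i,\phi_i\}$ furnished by Proposition 2.6 remains a resolution of the identity on the completion, so that $End_{C(L)}C(G)$ is the honest $C^*$-endomorphism algebra rather than a larger multiplier-type object. Once these foundational points are in place, the rest of the argument is formal.
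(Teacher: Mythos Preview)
Your proposal is correct and follows essentially the same route as the paper: lift $\pi$ from $\mathcal{R}(G)$ to $C(G)$ and then deduce amenability (hence injectivity) from the fact that the Haar state factors through a subalgebra of a type I / nuclear $C^*$-algebra, exactly the argument the paper invokes by citing \cite{bfr} and \cite{bmt}. The paper's own proof is a two-line reference to these facts, so what you have written is simply a fleshed-out version of the same strategy; the technical caveat you flag about transferring the finite projective frame to the completion is not addressed in the paper either and is indeed the only nontrivial point to verify.
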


\begin{proof}
The amenability follows as in Proposition 4.4 in \cite{bfr}, and the result itself follows by lifting the map $\pi$ from Proposition 2.6 to the corresponding $C^*$-algebra $C(G)$.
\end{proof}

Converting the above result into a true stationarity statement is a quite subtle task, which raises a number of interesting algebraic questions. We will discuss here some of these questions, and we intend to perform a more systematic study in a future paper.

\section{Cleft extensions}

In the context of Theorem 2.7, the simplest situation is that where $C(G)$ is free as a module over $C(L)$. Indeed, assuming that it is so, the target algebra becomes a usual matrix algebra, and Theorem 2.7 itself becomes a usual stationarity statement.

Now in order to have this freeness property, we are led to the cleft sequences, and to the strongest notion of virtual abelianity, from Proposition 2.5 (4). To be more precise, the result which can be extracted in this way from Theorem 2.7 is as follows:

\begin{theorem}
Assuming that we have a cleft sequence $1\to F\to G\to L\to1$, with $F$ being finite, and with $L$ being classical, we have a matrix model
$$\pi:C(G)\to M_F(C(L))$$
which is stationary, in the sense of Definition 2.2 above.
\end{theorem}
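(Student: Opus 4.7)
The plan is to combine Theorem 2.7 with the structural consequences of cleftness, reducing the abstract endomorphism algebra $End_{C(L)}C(G)$ to a concrete matrix algebra $M_F(C(L))$. Theorem 2.7 already delivers a stationary embedding of $C(G)$ into $End_{C(L)}C(G)$, so the remaining task is to identify the target.

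The first step is to exploit the cleftness of the sequence $1\to F\to G\to L\to 1$ at the Hopf algebra level. By the classical cleft extensions theorem (Doi--Takeuchi / Blattner--Cohen--Montgomery, Theorem 7.2.2 in \cite{mon}), the extension $\mathcal{R}(L)\subset\mathcal{R}(G)$ is isomorphic to a crossed product $\mathcal{R}(L)\#_\sigma\mathcal{R}(F)$, and in particular $\mathcal{R}(G)\cong\mathcal{R}(L)\otimes\mathcal{R}(F)$ as left $\mathcal{R}(L)$-modules via the cleaving map. Since $F$ is finite, this exhibits $\mathcal{R}(G)$ as a free left $\mathcal{R}(L)$-module of rank $|F|=\dim_\mathbb{C}\mathcal{R}(F)$.

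Next I would lift this freeness to the $C^*$-level. Because $\mathcal{R}(F)$ is finite dimensional, the algebraic isomorphism $\mathcal{R}(G)\cong\mathcal{R}(L)\otimes\mathcal{R}(F)$ of $\mathcal{R}(L)$-modules extends, after $C^*$-completion, to an isomorphism $C(G)\cong C(L)\otimes\mathcal{R}(F)$ of right Hilbert $C(L)$-modules, where the inner product on the right-hand side is the standard one on $C(L)^{|F|}$. Consequently the algebra of adjointable $C(L)$-linear endomorphisms is identified with $M_{|F|}(C(L))=M_F(C(L))$. Composing this identification with the embedding of Theorem 2.7 yields the desired model
\begin{equation*}
\pi\colon C(G)\hookrightarrow End_{C(L)}C(G)\simeq M_F(C(L)).
\end{equation*}

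The remaining verification is that stationarity survives the identification. Theorem 2.7 states $\int_G=\int_L\circ\,tr\circ\pi$, where $tr$ is the canonical conditional expectation $End_{C(L)}C(G)\to C(L)$; once the cleft trivialization $C(G)\simeq C(L)^{|F|}$ is fixed, this $tr$ becomes precisely the normalized matrix trace on $M_F(C(L))$, because the cleaving section provides an orthonormal basis of the module $C(G)$ of the correct cardinality. I anticipate the main technical nuisance to be this last point: one must check that the trace coming from the abstract Morita-type isomorphism $\Xi$ of Proposition 2.6 coincides with the standard normalized trace under the cleft trivialization. This amounts to tracking that the element $a_i\otimes b_i\in\mathcal{R}(G)\otimes_{\mathcal{R}(L)}\mathcal{R}(G)$ mapping to $1\otimes\int_{\widehat{F}}$ is the one induced by the cleaving map, which follows from the explicit form of the cleft isomorphism together with the characterization of $\int_{\widehat{F}}$ as the normalized Haar integral on the finite quantum group $\widehat{F}$. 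Once this is in place, the stationarity condition of Definition 2.2 is immediate.
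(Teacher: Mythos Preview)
Your proposal is correct and follows essentially the same route as the paper: use the cleftness hypothesis together with the Doi--Takeuchi/Montgomery results (Theorems 7.2.2 and 7.2.11 in \cite{mon}) to upgrade the finitely generated projective $\mathcal{R}(L)$-module $\mathcal{R}(G)$ to a free module of rank $|F|$, so that the endomorphism algebra in Proposition 2.6/Theorem 2.7 becomes $M_{|F|}(\mathcal{R}(L))$ and hence $M_F(C(L))$ after completion. The paper is terser about the trace identification you flag as a ``nuisance'', simply invoking the integration formula of Proposition 2.6 directly; your more careful tracking of the element $a_i\otimes b_i$ under the cleft trivialization is a reasonable way to fill that in.
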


\begin{proof}
According to Definition 2.4 above, our assumption is that we have a cleft sequence as follows, with $F$ finite and $L$ classical:
$$\mathbb C\to\mathcal R(L)\to\mathcal R(G)\to\mathcal R(F)\to\mathbb C$$

We saw above that as a right $\mathcal{R}(L)$-module, $\mathcal{R}(G)$ is projective and finitely generated. In the present setting however, Theorems 7.2.2 and 7.2.11 of \cite{mon} show that the module is in fact free of rank $|F|$. Thus, we have an isomorphism as follows:
$$End_{\mathcal R(L)}\mathcal R(G)\simeq M_{|F|}(\mathcal R(L))$$

Thus, the map $\pi$ in the statement is simply the one constructed in Proposition 2.6, and the stationarity property corresponds to the integration formula in Proposition 2.6. The lift from Peter-Weyl algebras to $C^*$-algebras follows as in Theorem 2.7.
\end{proof}

Observe that the above result covers three main situations, as follows:

\begin{proposition}
The construction in Theorem 3.1 provides us with a stationary model $\pi:C(G)\to M_F(C(L))$, with $F$ finite, and $L$ compact, in the following cases:
\begin{enumerate}
\item For any compact group $G$.

\item For any group dual $G=\widehat{\Gamma}$ which is such that $C(G)$ is of type I.

\item For any finite quantum group $G$.
\end{enumerate}
\end{proposition}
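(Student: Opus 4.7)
The plan is to verify in each of the three cases that the hypotheses of Theorem 3.1 are satisfied, i.e.\ that we have a cleft exact sequence $1\to F\to G\to L\to 1$ with $F$ finite and $L$ classical, and then quote Theorem 3.1 directly.

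For case (1), $G$ compact, I would take the trivial factorization $L=G$ and $F=\{e\}$. Then the exact sequence $1\to\{e\}\to G\to G\to 1$ at the Hopf algebra level reads $\mathbb C\to\mathcal R(G)\xrightarrow{\mathrm{id}}\mathcal R(G)\to\mathbb C\to\mathbb C$, and verifying the four conditions of Definition 2.3 is routine. Cleftness (Definition 2.4) is automatic: the required convolution-invertible right $\mathbb C$-comodule map $\mathbb C\to\mathcal R(G)$ is just the unit. Theorem 3.1 then produces a stationary model $\pi:C(G)\to M_1(C(G))=C(G)$, which is just the identity; stationarity reduces to the tautology $\int_G=\int_G$.

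For case (3), $G$ finite quantum, I would run the dual trick and take $F=G$, $L=\{e\}$. The exact sequence $1\to G\to G\to\{e\}\to 1$, dually $\mathbb C\to\mathbb C\to\mathcal R(G)\xrightarrow{\mathrm{id}}\mathcal R(G)\to\mathbb C$, is again straightforward to verify and trivially cleft, the section being the identity. Theorem 3.1 outputs a stationary model $\pi:C(G)\to M_G(\mathbb C)$, which recovers the standard fact that any finite quantum group has a faithful stationary matrix model coming from left multiplication on $\mathcal R(G)$ equipped with the Haar state as inner product.

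For case (2), $G=\widehat\Gamma$ of type I, I would apply Theorem 1.2: the type I hypothesis gives an abelian normal subgroup $\Lambda\triangleleft\Gamma$ with $\Phi=\Gamma/\Lambda$ finite. Dualizing the short exact sequence $1\to\Lambda\to\Gamma\to\Phi\to 1$ yields an exact sequence of compact quantum groups $1\to\widehat\Phi\to\widehat\Gamma\to\widehat\Lambda\to 1$, with $F=\widehat\Phi$ finite and $L=\widehat\Lambda$ compact abelian (hence classical). This is the content of the implication $(3)\implies(2)$ in Proposition 2.5 in the group dual setting. To upgrade it to cleftness I invoke the final sentence of Proposition 2.5, namely that in the group dual case $(3)\implies(4)$, which was established using a section of the quotient map $\widehat G\to\widehat F$ coming from a set-theoretic section $\Phi\to\Gamma$. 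Theorem 3.1 now yields a stationary model $\pi:C^*(\Gamma)\to M_{\widehat\Phi}(C(\widehat\Lambda))$, recovering the construction of $(3)\implies(2)$ in Theorem 1.2.

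The only mild obstacle is bookkeeping: one must confirm that the degenerate sequences in (1) and (3) really do fit Definitions 2.3 and 2.4 as stated; and for (2) one must ensure that the cleftness argument quoted from Proposition 2.5 produces precisely the data needed by Theorem 3.1. Neither issue is substantive, so the proposition is essentially a corollary packaging of Theorem 3.1, Theorem 1.2, and Proposition 2.5.
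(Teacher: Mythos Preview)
Your proof is correct and follows exactly the same approach as the paper: in each case one exhibits the obvious cleft exact sequence (namely $F=\{1\},L=G$ for compact groups; $F=\widehat{\Phi},L=\widehat{\Lambda}$ via Thoma's theorem for group duals; and $F=G,L=\{1\}$ for finite quantum groups) and invokes Theorem 3.1. The paper's proof is in fact more terse than yours, omitting the verification of exactness and cleftness that you sketch.
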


\begin{proof}
All these assertions are clear, as follows:

(1) Groups. Here the result holds indeed, because we can use the quotient map $G\to G$, which provides us with the model $\pi:C(G)\to M_1(C(G))$. 

(2) Group duals. Here, with notations from Thoma's theorem, we can use the quotient $\widehat{\Gamma}\to\widehat{\Lambda}$, which provides us with the model $\pi:C^*(\Gamma)\to M_F(C^*(\Lambda))$, where $F=\widehat{\Phi}$.

(3) Finite quantum groups. Here the result holds as well, because we can use the quotient $G\to\{1\}$, which provides us with the model $\pi:C(G)\to M_G(\mathbb C)$. 
\end{proof}

In general, Theorem 3.1 cannot be regarded as the ``quantum Thoma
theorem'', because it does not cover several examples of compact
quantum groups $G$ whose algebras $C(G)$ are known to be of type I. To
be more precise, we have the following result:

\begin{proposition}
We have stationary models $\pi:C(G)\to M_F(C(L))$, with $F$ finite, and $L$ compact, not necessarily coming from a cleft sequence, in the following cases:
\begin{enumerate}
\item For the non-classical subgroups $G\subset O_N^*$.

\item For the quantum permutation group $G=S_4^+$.

\item For the quotients $G\to H$ of the quantum groups $G$ having this property.
\end{enumerate}
\end{proposition}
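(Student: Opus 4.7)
The plan is to dispatch the three cases separately. Case (3) is essentially abstract: if $G$ has a stationary model $\pi: C(G) \to M_F(C(L))$ and $G \to H$ is a quotient of compact quantum groups, then at the Peter--Weyl level we have a Hopf $*$-subalgebra inclusion $\mathcal{R}(H) \hookrightarrow \mathcal{R}(G)$, which passes to a $C^*$-inclusion $C(H) \hookrightarrow C(G)$ by amenability of $\widehat{G}$ (a consequence of the existence of $\pi$, as noted after Definition 2.2). The restriction $\pi|_{C(H)}$ is then a $*$-homomorphism $C(H) \to M_F(C(L))$, and stationarity is preserved since the Haar state of $C(G)$ restricts to that of $C(H)$ by a standard invariance argument, giving $\int_H = (tr \otimes \int_L)(\pi|_{C(H)})$.

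For Case (1), the plan is to exploit the classical nature of the projective version: $PO_N^* = PU_N$ is a classical compact group, so for any subgroup $G \subset O_N^*$ the projective version $PG \subset PU_N$ is classical as well. Taking $L$ to be the preimage of $PG$ in $U_N$ under $U_N \to PU_N$, I would define $\pi: C(G) \to M_2(C(L))$ on generators by
$$v_{ij} \mapsto \begin{pmatrix} 0 & u_{ij} \\ u_{ij}^* & 0 \end{pmatrix},$$
where $u = (u_{ij})$ is the fundamental corepresentation of $L \subset U_N$. The target matrices are selfadjoint, form an orthogonal matrix, and satisfy the half-commutation $abc = cba$; the specific choice of $L$ as preimage of $PG$ is what guarantees that the extra relations cutting $G$ out of $O_N^*$ survive in $M_2(C(L))$. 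Stationarity is then a moment computation matching the Haar integrals of $G$ with those of $L$ via the Weingarten calculus for $O_N^*$.

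For Case (2), I would invoke the known exceptional stationary model $\pi: C(S_4^+) \to M_4(C(SO_3))$ built from the natural $SO_3$-action on a 4-element homogeneous space (equivalently, via the 4-dimensional representation of $SO_3$), which realizes the magic unitary as a matrix of rank-one projections over $C(SO_3)$. Verifying stationarity reduces to a moment comparison between $S_4^+$ and $SO_3$ carried out in the literature.

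The main obstacle lies in the moment-matching verifications for (1) and (2), since these models do not come from cleft sequences and so Theorem 3.1 is unavailable as a black box. In (1), the key input is that $M_2(C(L))$ naturally absorbs the half-commutation structure of $C(O_N^*)$, making the $2\times 2$ block formula preserve all relations; in (2), one exploits a low-dimensional coincidence relating $S_4^+$ to $SO_3$ with no direct analogue for $S_N^+$ with $N \geq 5$. Case (3) itself is formal once one observes that stationarity is preserved under restriction to Hopf $*$-subalgebras.
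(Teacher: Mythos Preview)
Your overall strategy matches the paper's: cases (1) and (2) cite known constructions from the literature, while (3) is the formal restriction argument you give. Two remarks are in order.

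For (2), your description of the model is incorrect: $SO_3$ is connected, so it has no nontrivial finite homogeneous space, and in particular no $4$-element one. The model $\pi:C(S_4^+)\to M_4(C(SO_3))$ from \cite{bco} is the \emph{Pauli matrix model}: one parametrizes rank-one projections in $M_2(\mathbb C)$ by elements of $SU_2$, uses the four Pauli matrices to assemble a $4\times 4$ magic unitary over $C(SU_2)$, and observes that the construction descends to $SO_3=SU_2/\{\pm 1\}$. Your clause ``realizes the magic unitary as a matrix of rank-one projections over $C(SO_3)$'' is correct, but the mechanism producing those projections has nothing to do with an $SO_3$-action on four points or with the $4$-dimensional representation of $SO_3$.

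For (1), the paper's route to stationarity (carried out in Section 4, see Theorem 4.4 and Proposition 4.5) is not a direct Weingarten computation but rather an application of the cleft-sequence machinery of Theorem 3.1 to the crossed product $L\rtimes\mathbb Z_2$, followed by restriction along the quotient $L\rtimes\mathbb Z_2\to G$---that is, it uses exactly the mechanism of your case (3), one level up. This is worth noting because it shows the examples in (1) are not as far from Theorem 3.1 as the statement of the proposition suggests: they do not come from a cleft sequence for $G$ itself, but they inherit stationarity from one for a slightly larger quantum group. Your proposed Weingarten verification would also work, but it obscures this structural explanation.
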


\begin{proof}
Regarding the constructive part, all the results are well-known, as follows:

(1) It is known from \cite{bdu} that for such quantum groups we have stationary models $\pi:C(G)\to M_{\mathbb Z_2}(C(L))$, with $L\subset U_N$. We will discuss this in section 4 below.

(2) Here is is known from \cite{bco} that we have a model $\pi:C(S_4^+)\to M_{\mathbb Z_2\times\mathbb Z_2}(C(SO_3))$, which is stationary. Once again, we will discuss this, in section 6 below.

(3) Assuming indeed that we have an embedding $\rho:C(H)\subset C(G)$, we can compose $\rho$ with the stationary model for $C(G)$, and we obtain a stationary model for $C(H)$.

Regarding now the negative statements, in relation with the cleft sequences, the results here are once again well-known, and we will discuss them in sections 4-6 below.
\end{proof}

As a conclusion, Theorem 3.1 should be regarded as being just a first step towards a quantum Thoma theorem, waiting to be further generalized, as to cover for instance the examples in Proposition 3.3. We are still far away from some conjectural statement here, but in relation with all this, we can however formulate a conjecture, as follows:

\begin{conjecture}
For a compact quantum group $G$, the following are equivalent:
\begin{enumerate}
\item $C(G)$ is of type I.

\item $C(G)$ has a stationary model.
\end{enumerate}
\end{conjecture}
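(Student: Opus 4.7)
The implication $(2)\implies(1)$ should be routine and parallels the group case: given a stationary model $\pi:C(G)\to M_K(C(X))$, the amenability argument recalled after Definition 2.2 factors $\pi$ through the reduced algebra, forces $\widehat G$ to be amenable, and promotes $\pi$ to a genuine embedding of $C(G)$ into $M_K(C(X))$, which is the definition of type I in the sense of (1).

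The substantive direction is $(1)\implies(2)$, which I would split into two stages. In the \emph{structural} stage one starts from an embedding $\pi_0:C(G)\subset M_K(C(X))$ and tries to produce an exact sequence $1\to F\to G\to L\to 1$ with $F$ a finite quantum group and $L$ a compact classical group, or at least a classical finite co-index quotient $G\to L$ in the sense of Proposition 2.5. A natural first guess for $L$ is the maximal classical quotient of $G$, and a natural first guess for the co-index bound is the fibre dimension $K$: using the Peter-Weyl decomposition of $\mathcal R(G)$ together with the fact that each irreducible of $G$ must embed into $M_K(C(X))$, one can try to bound $[\widehat G:\widehat L]$ in terms of $K$. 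In the \emph{constructive} stage one would then feed the resulting quotient data into Theorem 3.1 to produce the stationary model.

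The constructive stage is essentially handled by Theorem 3.1 when the exact sequence is cleft, but Proposition 3.3 exhibits type I examples --- the non-classical subgroups of $O_N^*$, the permutation group $S_4^+$, and quotients of these --- that are known to carry stationary models not coming from cleft sequences. So a strengthened version of Theorem 3.1 is needed, in which the cleftness hypothesis is relaxed, for instance to projectivity of $\mathcal R(G)$ over $\mathcal R(L)$ together with an $\mathcal R(L)$-linear identification $End_{\mathcal R(L)}\mathcal R(G)\simeq M_F(\mathcal R(L))$ for some finite quantum group $F$. The integration identity from Proposition 2.6 already survives this weakening; only the matrix form of the target requires new input, presumably a strengthening of Theorems 7.2.2 and 7.2.11 of \cite{mon}.

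The main obstacle, and the reason the statement is recorded as a conjecture rather than a theorem, is the structural stage: no quantum analog of the Thoma-Kaniuth-Smith structural theorem is presently available, and it is not even clear that a type I compact quantum group must admit a non-trivial classical quotient of finite co-index. A realistic intermediate plan is to verify the conjecture within each of the families appearing in Proposition 3.3 --- non-classical subgroups of $O_N^*$, quotients of $S_4^+$, and the quasi-flat quantum permutation models studied in sections 4--6 --- and then attempt to extract from these cases a common mechanism that could be promoted to a general existence statement.
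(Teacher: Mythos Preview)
The statement you are asked to prove is Conjecture 3.4, and the paper does not prove it: it is recorded precisely as a conjecture, with the only supporting discussion being the sentence ``Regarding the evidence, the known examples of compact quantum groups as in (1) are those in Proposition 3.2 and Proposition 3.3. Thus, we have no counterexamples.'' There is therefore no paper proof to compare your proposal against.

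Your submission is not a proof either, and you say so explicitly. That is the correct assessment. Your treatment of $(2)\implies(1)$ matches what the paper sketches after Definitions 1.1 and 2.2: stationarity forces amenability of $\widehat G$ and hence faithfulness of $\pi$, so $C(G)$ embeds in $M_K(C(X))$, which is the paper's working definition of type I. For $(1)\implies(2)$ you correctly identify the two missing ingredients --- a quantum structural analogue of the Thoma--Kaniuth--Smith theorem producing a finite co-index classical quotient, and a strengthening of Theorem 3.1 beyond the cleft case --- and you correctly note that Proposition 3.3 already shows the cleft framework is too narrow. This is exactly the state of affairs the paper describes in the paragraphs surrounding Conjecture 3.4 and in the later discussion of Conjecture 5.7.

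In short: there is no gap in your reasoning, but there is also no proof, because none exists in the paper; your proposal is an accurate research outline that agrees with the paper's own evaluation of what is known and what is missing.
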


Regarding the evidence, the known examples of compact quantum groups as in (1) are those in Proposition 3.2 and Proposition 3.3. Thus, we have no counterexamples.

Finally, let us mention that this kind of conjecture does not make much sense if we remove the assumption $S^2=id$. Indeed, if the algebra $C(G)$ has a stationary model then the Haar integration must be a trace, and by \cite{wo3} we must have $S^2=id$.

Of course, understanding the structure of the compact quantum groups $G$, in the generalized sense of \cite{wo3}, with no $S^2=id$ assumption, whose associated algebras $C(G)$ are of type I is an interesting question. As a basic example here, the algebra $C(SU_q(N))$ with $q\in\mathbb R-\{0\}$, constructed in \cite{wo2}, is of type I, for any value of $q$, as shown in \cite{bra}. Our present techniques cannot be used for investigating such algebras, and this is why we formulated Definition 2.1 above as it is, with the assumption $S^2=id$ included.

We refer to \cite{bcv}, \cite{fim} for more on these topics.

\section{Cyclic models}

We restrict the attention in what follows to the matrix case. The formalism here, due once again to Woronowicz \cite{wo1}, \cite{wo2}, is particularly simple, as follows:

\begin{definition}
Assume that $A$ is a unital $C^*$-algebra, and $u\in M_N(A)$ is a unitary matrix, such that the following formulae define morphisms of $C^*$-algebras:
$$\Delta(u_{ij})=\sum_ku_{ik}\otimes u_{kj}\quad,\quad\varepsilon(u_{ij})=\delta_{ij}\quad,\quad S(u_{ij})=u_{ji}^*$$
We write then $A=C(G)=C^*(\Gamma)$, and call $G,\Gamma$ compact matrix quantum group (or compact quantum Lie group), respectively finitely generated discrete quantum group.
\end{definition}

Observe that the above morphisms $\Delta,\varepsilon,S$ satisfy the usual Hopf algebra axioms, along with the extra axiom $S^2=id$. As explained in \cite{wo3}, the simplifiability assumptions are satisfied as well, so this definition is compatible with Definition 2.1 above. This restricted formalism covers the compact Lie groups $G\subset U_N$, their $q$-deformations at $q=-1$, as well as the finitely generated discrete groups $\Gamma=<g_1,\ldots,g_N>$. See \cite{wo1}, \cite{wo2}.

We will be interested here in the quantum groups appearing via the half-liberation operation \cite{bb2}, \cite{bb3}, \cite{bi2}, \cite{bdu}. Let us first recall from \cite{bb3} that we have:

\begin{definition}
The half-liberated unitary group $U_N^*$ is the compact quantum group with standard coordinates $(u_{ij})_{i,j=1,\ldots,N}$ subject to the following conditions:
\begin{enumerate}
\item The matrices $u=(u_{ij})$ and $\bar{u}=(u_{ij}^*)$ are both unitaries.

\item The elements $\{ab^*,a^*b\}$, with $a,b\in\{u_{ij}\}$, all commute.
\end{enumerate}
\end{definition}

Here the relations (1) are those defining the free unitary group $U_N^+$, constructed by Wang in \cite{wa1}. As for the relations (2), the idea here is that associated to any closed subgroup $G\subset U_N^+$ are its left, right and full projective versions, having $p=u\otimes\bar{u}$, $q=\bar{u}\otimes u$, $r=p\oplus q$ as fundamental corepresentations. With this notion in hand, $U_N^*\subset U_N^+$ is the biggest closed subgroup having a classical full projective version. See \cite{bb3}.

The relation with the matrix models comes from the following fact:

\begin{proposition}
If $L$ is a compact group, having a $N$-dimensional unitary corepresentation $v$, and an order $K$ automorphism $\sigma:L\to L$, we have a matrix model
$$\pi:C(U_N^*)\to M_K(C(L))\quad,\quad u_{ij}\to\tau[v_{ij}^{(1)},\ldots,v_{ij}^{(K)}]$$
where $v^{(i)}(g)=v(\sigma^i(g))$, and where $\tau[x_1,\ldots,x_K]$ is obtained by filling the standard $K$-cycle $\tau\in M_K(0,1)$ with the elements $x_1,\ldots,x_K$. We call such models ``cyclic''.
\end{proposition}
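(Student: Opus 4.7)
The plan is to use the universal property of $C(U_N^*)$: I need to verify that the prescribed images $U_{ij}:=\pi(u_{ij})=\tau[v_{ij}^{(1)},\ldots,v_{ij}^{(K)}]\in M_K(C(L))$ satisfy the two families of relations in Definition 4.2. The convenient bookkeeping device is to write, in terms of matrix units $E_{l,l+1}\in M_K(\mathbb{C})$ with indices read modulo $K$,
$$U_{ij}=\sum_{l=1}^K v_{ij}^{(l)}\otimes E_{l,l+1}\in C(L)\otimes M_K(\mathbb{C}).$$
Since the $E_{l,l+1}$ sit on a cyclic shift pattern, taking $*$ reverses the shift: $U_{ij}^*=\sum_l v_{ij}^{(l)*}\otimes E_{l+1,l}$.

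The heart of the verification is a short computation that uses $E_{ab}E_{cd}=\delta_{bc}E_{ad}$. For any two standard generators $u_{ij}$ and $u_{kl}$, one finds
$$U_{ij}U_{kl}^*=\sum_m v_{ij}^{(m)}v_{kl}^{(m)*}\otimes E_{m,m},\qquad U_{ij}^*U_{kl}=\sum_m v_{ij}^{(m)*}v_{kl}^{(m)}\otimes E_{m+1,m+1}.$$
Both outputs are \emph{diagonal} elements of $M_K(C(L))$, and since $L$ is a classical compact group the algebra $C(L)$ is commutative, so any pair of such diagonal matrices commutes. This delivers the half-commutation relations of Definition 4.2(2) for all four combinations $\{ab^*,a^*b\}\times\{cd^*,c^*d\}$.

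For the unitarity relations in Definition 4.2(1), the point is that $v^{(l)}=v\circ\sigma^l$ is again a unitary corepresentation of $L$ for every $l$, because $\sigma$ is a group automorphism; thus $\sum_j v_{ij}^{(l)}v_{kj}^{(l)*}=\delta_{ik}$ and $\sum_j v_{ji}^{(l)*}v_{jk}^{(l)}=\delta_{ik}$ hold in $C(L)$, and the analogous identities with the $*$'s moved (legitimate by commutativity of $C(L)$) hold as well. Summing the diagonal formulas above over $j$ with $(k,l)\rightsquigarrow(k,j)$ and collapsing the Kronecker deltas yields
$$\sum_j U_{ij}U_{kj}^*=\sum_j U_{ji}^*U_{jk}=\delta_{ik}\cdot I_K,$$
so $U=(U_{ij})$ is unitary; the corresponding calculation for $\bar U=(U_{ij}^*)$ is identical in structure.

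I do not expect a serious obstacle here: the entire argument is a direct universal-property verification, and the only point requiring care is the cyclic indexing convention implicit in the notation $\tau[x_1,\ldots,x_K]$. The conceptual reason the construction works is precisely that in the products $U_{ij}U_{kl}^*$ and $U_{ij}^*U_{kl}$ the shift factors $E_{l,l+1}$ and $E_{l+1,l}$ cancel to produce diagonal matrices, which automatically commute over the commutative ring $C(L)$ — this is where the half-commutation of $U_N^*$ (as opposed to full commutativity of $U_N$) enters in an essential way.
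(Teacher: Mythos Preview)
Your proof is correct and follows essentially the same approach as the paper's: both verify that $U=(U_{ij})$ and $\bar U$ are unitary (hence define a model of $C(U_N^+)$), then observe that the products $U_{ij}U_{kl}^*$ and $U_{ij}^*U_{kl}$ are diagonal in $M_K(C(L))$ and therefore commute, forcing the factorization through $C(U_N^*)$. Your version is more explicit---you write out the matrix-unit expansions and the diagonal sums, whereas the paper simply asserts these facts---and your cyclic indexing $E_{l,l+1}$ is the transpose of the paper's displayed matrix (which puts $v_{ij}^{(l)}$ in position $(l,l-1)$), but this is immaterial to the argument.
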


\begin{proof}
The matrices $U_{ij}=\tau[v_{ij}^{(1)},\ldots,v_{ij}^{(K)}]$ in the statement appear by definition as follows, with the convention that all the blank spaces denote 0 entries:
$$U_{ij}=\begin{pmatrix}
&&&v_{ij}^{(1)}\\
v_{ij}^{(2)}\\
&\ddots\\
&&v_{ij}^{(K)}
\end{pmatrix}$$

The matrix $U=(U_{ij})$ is then unitary, and so is $\bar{U}=(U_{ij}^*)$. Thus, if we denote by $w=(w_{ij})$ the fundamental corepresentation of $C(U_N^+)$, we have a model as follows:
$$\rho:C(U_N^+)\to M_K(C(L))\quad,\quad w_{ij}\to U_{ij}$$

Now observe that the matrices $U_{ij}U_{kl}^*,U_{ij}^*U_{kl}$ are all diagonal, so in particular, they commute. Thus the above morphism $\rho$ factorizes through $C(U_N^*)$, as claimed.
\end{proof}

Following \cite{ban}, we say that a matrix model $\pi:C(G)\to M_K(C(X))$ is stationary on its image when its image coincides with its Hopf image. The terminology comes from the fact that, when this condition is satisfied, the stationarity property is automatic.

With this notion in hand, we can apply our Thoma type results, and we obtain:

\begin{theorem}
Any cyclic model $\pi:C(U_N^*)\to M_K(C(L))$ is stationary on its image, with the corresponding closed subgroup $[L]\subset U_N^*$, given by $Im(\pi)=C([L])$, being the quotient $L\rtimes\mathbb Z_K\to[L]$ having as coordinates the variables $u_{ij}=v_{ij}\otimes\tau$.
\end{theorem}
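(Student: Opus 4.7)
The plan is to realize the Hopf image $[L]$ as an explicit sub-Hopf-algebra of the crossed-product Hopf algebra $\mathcal A:=\mathcal R(L)\rtimes_\sigma\mathbb Z_K$ — this is the $S^2=\mathrm{id}$ compact quantum group $L\rtimes\mathbb Z_K$ meant in the statement, with $\tau$ group-like, original coproduct on $\mathcal R(L)$, and algebra relation $\tau f=\sigma(f)\tau$, hence noncommutative whenever $\sigma$ is nontrivial — and then to match it with $\mathrm{Im}(\pi)$ via a faithful $*$-representation $\mathcal A\hookrightarrow M_K(C(L))$.

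Set $u_{ij}:=v_{ij}\cdot\tau\in\mathcal A$. Since $\tau$ is group-like, $\Delta(u_{ij})=\Delta(v_{ij})(\tau\otimes\tau)=\sum_k u_{ik}\otimes u_{kj}$; and unitarity of $u=(u_{ij})$ and $\bar u=(u_{ij}^*)$ is immediate from that of $v,\bar v$ together with $\sigma$ being a $*$-automorphism. The $U_N^*$ half-commutation relations hold automatically, because
\[u_{ij}u_{kl}^*=v_{ij}v_{kl}^*,\qquad u_{ij}^*u_{kl}=\sigma^{-1}(v_{ij}^*v_{kl})\]
both land in the commutative subalgebra $\mathcal R(L)\subset\mathcal A$ and hence pairwise commute. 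The universal property of $C(U_N^*)$ then produces a Hopf algebra surjection $\Phi:C(U_N^*)\twoheadrightarrow\mathcal R([L]):=\langle u_{ij},u_{ij}^*\rangle\subset\mathcal A$, simultaneously realizing the quantum quotient $L\rtimes\mathbb Z_K\to[L]$ and the closed inclusion $[L]\subset U_N^*$ of the statement.

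To connect this to $\pi$, I would introduce the $*$-representation $\rho:\mathcal A\to M_K(C(L))$ sending $\tau$ to the standard $K$-cycle matrix and $f\in\mathcal R(L)$ to $\mathrm{diag}(f,f\circ\sigma,\ldots,f\circ\sigma^{K-1})$. A direct matrix computation gives $\rho(u_{ij})=U_{ij}$ as in Proposition 4.3, so $\rho\circ\Phi=\pi$. Once $\rho$ is known to be faithful, we obtain $\mathrm{Im}(\pi)=\rho(\mathcal R([L]))\cong C([L])$, which is precisely the stationarity-on-image conclusion, with the identification of $[L]$ and its coordinates as described.

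The main obstacle is the faithfulness of $\rho$. Using the Fourier grading $\mathcal A=\bigoplus_{k=0}^{K-1}\mathcal R(L)\tau^k$ and the fact that $\sigma$ has exact order $K$, an element $\sum_k f_k\tau^k$ in the kernel of $\rho$ splits into $K$ disjoint off-diagonal bands of $M_K$, so the vanishing of each diagonal $\mathrm{diag}(f_k,f_k\circ\sigma,\ldots,f_k\circ\sigma^{K-1})$ in $C(L)$ forces $f_k=0$; this reduces to the classical fact that the regular representation of $\mathbb Z_K$ on $\mathbb C^K$ is faithful.
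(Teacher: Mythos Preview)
Your proof is correct and follows the same overall architecture as the paper: factor $\pi$ through the crossed product $L\rtimes\mathbb Z_K$, identify $[L]$ as the sub-Hopf-algebra generated by the $u_{ij}=v_{ij}\tau$, and show that the residual map into $M_K(C(L))$ is injective, so that the image of $\pi$ coincides with its Hopf image.

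The only genuine difference is in how the injectivity of $\rho:C(L\rtimes\mathbb Z_K)\to M_K(C(L))$ is obtained. The paper does not argue this directly; instead it observes that the exact sequence $1\to\mathbb Z_K\to L\rtimes\mathbb Z_K\to L\to 1$ is cleft and invokes the Thoma-type construction of Theorem~3.1, which produces $\rho$ as a \emph{stationary} model (hence automatically faithful), and then notes that the inclusion $\nu:C([L])\hookrightarrow C(L\rtimes\mathbb Z_K)$ commutes with the Haar functionals, so $\rho\nu$ is itself stationary. Your route instead writes $\rho$ down explicitly and checks faithfulness by hand via the $\mathbb Z_K$-grading and the disjoint-band decomposition in $M_K$; this is more elementary and fully self-contained (and, incidentally, does not actually need $\sigma$ to have exact order $K$), but it stops short of exhibiting the Haar-integration identity $\int_{[L]}=(tr\otimes\int_L)\rho\nu$ that the paper gets for free. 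Since the statement only asks for ``stationary on its image'' in the sense image $=$ Hopf image, your argument suffices; the paper's approach has the bonus of tying the result back to the general machinery of Sections~2--3.
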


\begin{proof}
Assuming that $(L,\sigma)$ are as in Proposition 4.3, we have an action $\mathbb Z_K\curvearrowright L$, and we can therefore consider the following short exact sequence:
$$1\to\mathbb Z_K\to L\rtimes\mathbb Z_K\to L\to1$$

By performing the Thoma construction we obtain a model as follows, where $x^{(i)}=\tilde{\sigma}^i(x)$, with $\tilde{\sigma}:C(L)\to C(L)$ being the automorphism induced by $\sigma:L\to L$:
$$\rho:C(L\rtimes\mathbb Z_K)\subset M_K(C(L))\quad,\quad x\otimes\tau^i\to\tau^i[x^{(1)},\ldots,x^{(K)}]$$

Consider now the quotient quantum group $L\rtimes\mathbb Z_K\to[L]$ having as coordinates the variables $u_{ij}=v_{ij}\otimes\tau$. We have then a injective morphism, as follows:
$$\nu:C([L])\subset C(L\rtimes\mathbb Z_K)\quad,\quad u_{ij}\to v_{ij}\otimes\tau$$

By composing the above two embeddings, we obtain an embedding as follows:
$$\rho\nu:C([L])\subset M_K(C(L))\quad,\quad u_{ij}\to\tau[v_{ij}^{(1)},\ldots,v_{ij}^{(K)}]$$

Now since $\rho$ is stationary, and since $\nu$ commutes with the Haar funtionals as well, it follows that this morphism $\rho\nu$ is stationary, and this finishes the proof.
\end{proof}

We recall that $O_N^+\subset U_N^+$ is the closed subgroup obtained by assuming that the standard coordinates are self-adjoint. If we set $O_N^*=U_N^*\cap O_N^+$, the $O_N^*\subset O_N^+$ is the closed subgroup obtained by assuming that the standard coordinates satisfy the relations $abc=cba$. Moreover, it is known that we have an isomorphism $PO_N^*=PU_N$. See \cite{bb2}.

Now back to Theorem 4.4, when using $K=2$, and subgroups $L\subset U_N$ which are self-conjugate, we recover the following result, from \cite{bdu}:

\begin{proposition}
For any non-classical subgroup $G\subset O_N^*$ we have a stationary model
$$\pi:C(G)\to M_2(C(L))\quad,\quad u_{ij}=\begin{pmatrix}0&v_{ij}\\ \bar{v}_{ij}&0\end{pmatrix}$$
where $L\subset U_N$, with coordinates denoted $v_{ij}$, is the lift of $PG\subset PO_N^*=PU_N$.
\end{proposition}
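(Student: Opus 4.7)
The plan is to deduce this as the $K=2$ specialization of Theorem 4.4, applied to the cyclic datum $(L,\sigma)$ in which $L\subset U_N$ is the classical lift of $PG\subset PO_N^*=PU_N$ and $\sigma$ is the complex conjugation automorphism of $U_N$ restricted to $L$.

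First, since $G\subset O_N^*$ is non-classical, I would form its projective version $PG\subset PO_N^*=PU_N$, and take $L\subset U_N$ to be the preimage of $PG$ under the quotient map $U_N\to PU_N$, writing $v_{ij}$ for its standard coordinates. The projective generators $u_{ij}u_{kl}^*=u_{ij}u_{kl}$ of $PG$ are products of two self-adjoint elements, so the subset $PG\subset PU_N$ is stable under coordinatewise complex conjugation; this forces $L$ to be self-conjugate inside $U_N$, and consequently $\sigma(g)=\bar g$ defines an automorphism of $L$ of order dividing two. The hypotheses of Proposition 4.3 are thus in place.

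Next, I would invoke Proposition 4.3 with this data and $K=2$, obtaining a cyclic model $\pi:C(U_N^*)\to M_2(C(L))$ in which the generator $u_{ij}$ is sent to precisely the matrix $\begin{pmatrix}0 & v_{ij}\\ \bar v_{ij} & 0\end{pmatrix}$ of the statement. Self-adjointness of this matrix makes $\pi$ factor through $C(O_N^+)$, and the commutativity of $C(L)$ makes the cubic half-liberation relations $abc=cba$ hold automatically on the image, so that $\pi$ further descends to $C(O_N^*)$. Composing with the surjection $C(O_N^*)\to C(G)$ yields the desired morphism into $M_2(C(L))$.

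Finally, by Theorem 4.4 the associated cyclic model is stationary on its image, and this image is $C([L])$, where $[L]$ is the closed subgroup of $U_N^*$ with coordinates $v_{ij}\otimes\tau$; so everything reduces to the identification $[L]=G$. A direct computation shows that under $\pi$ one has $u_{ij}u_{kl}^*\mapsto v_{ij}\bar v_{kl}$, giving $P[L]=PL=PG$, and the full identification $[L]=G$ then amounts to a matching of the two as $\mathbb Z_2$-covers of their common classical projective version. I expect this final matching — which is essentially the main content of \cite{bdu} — to be the principal technical step; once it is carried out, the stationarity and faithfulness of the model on $C(G)$ follow immediately from Theorem 4.4.
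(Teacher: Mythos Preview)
Your approach is essentially the same as the paper's: both apply Theorem 4.4 at $K=2$ with the conjugation automorphism on a self-conjugate $L\subset U_N$, and both defer the identification $G=[L]$ to the structure theory in \cite{bdu}. The paper runs the argument in the opposite direction --- starting from an arbitrary self-conjugate $L$, building $[L]$, and then quoting \cite{bdu} for the fact that every non-classical $G\subset O_N^*$ arises this way --- but the content is the same.

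One expository slip worth flagging: in your third paragraph, ``composing with the surjection $C(O_N^*)\to C(G)$'' cannot produce a map out of $C(G)$; the surjection points the wrong way. What you actually need is that $\pi$ \emph{factors through} this surjection, which is exactly the statement $[L]=G$ that you correctly isolate in your final paragraph. So the third paragraph should stop once $\pi$ has descended to $C(O_N^*)$, and the passage to $C(G)$ should be handled entirely by the $[L]=G$ identification from \cite{bdu}.
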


\begin{proof}
Assume first that $L\subset U_N$ is self-conjugate, in the sense that $g\in L\implies\bar{g}\in L$. If we consider the order 2 automorphism of $C(L)$ induced by $g_{ij}\to\bar{g}_{ij}$, we can apply Theorem 4.4, and we obtain a stationary model, as follows:
$$\pi:C([L])\subset M_2(C(L))\quad,\quad u_{ij}\otimes1=\begin{pmatrix}0&v_{ij}\\ \bar{v}_{ij}&0\end{pmatrix}$$

The point now is that, as explained in \cite{bdu}, any non-classical subgroup $G\subset O_N^*$ must appear as $G=[L]$, for a certain self-conjugate subgroup $L\subset U_N$. Moreover, since we have $PG=P[L]$, it follows that $L\subset U_N$ is the lift of $PG\subset PO_N^*=PU_N$, as claimed. 
\end{proof}

In the unitary case now, and with $K\in\mathbb N$ being arbitrary, we recall from \cite{bb2} that $U_N^*$ has a certain ``arithmetic version'' $U_{N,K}^*\subset U_N^*$, obtained by imposing some natural length $2K$ relations on the standard coordinates. As basic examples here, at $K=1$ we have $U_{N,1}^*=U_N$, the defining relations being $ab=ba$ with $a,b\in\{u_{ij},u_{ij}^*\}$, and at $K=2$ we have $U_{N,2}^*=U_N^{**}$, with the latter quantum group being the one from \cite{bdu}, appearing via the relations $ab\cdot cd=cd\cdot ab$, for any $a,b,c,d\in\{u_{ij},u_{ij}^*\}$. See \cite{bb2}, \cite{bb3}.

We have the following result, which clarifies the relation with \cite{bb2}:

\begin{proposition}
For any subgroup $G\subset U_{N,K}^*$ which is $K$-symmetric, in the sense that $u_{ij}\to e^{2\pi i/K}u_{ij}$ defines an automorphism of $C(G)$, we have a stationary model
$$\pi:C(G)\to M_K(C(L))\quad,\quad u_{ij}\to\tau[v_{ij}^{(1)},\ldots,v_{ij}^{(K)}]$$
with $L\subset U_N^K$ being a closed subgroup which is symmetric, in the sense that it is stable under the cyclic action $\mathbb Z_K\curvearrowright U_N^K$.
\end{proposition}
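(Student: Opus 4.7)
The plan is to apply Theorem 4.4 to an appropriately constructed classical closed subgroup $L \subset U_N^K$ equipped with the cyclic shift automorphism $\sigma$, where the representation $v$ of $L$ is the projection to any one of the $K$ factors. If one can arrange that $G$ realizes the quotient $[L]$ of $L \rtimes \mathbb{Z}_K$ in the sense of Theorem 4.4, that theorem immediately produces the desired stationary model.

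First I would verify that the cyclic model of Proposition 4.3, applied to $L_0 = U_N^K$ with the cyclic shift and the first-factor representation $v$, actually factors through $C(U_{N,K}^*)$, giving a $*$-homomorphism $\pi_0 : C(U_{N,K}^*) \to M_K(C(U_N^K))$. The computation is a direct extension of the $K=2$ argument recalled after Proposition 4.5: any length-$2K$ alternating product of $U_{ij} = \tau[v_{ij}^{(1)}, \ldots, v_{ij}^{(K)}]$ and $U_{ij}^*$ lands in the diagonal subalgebra of $M_K(C(U_N^K))$, because each factor shifts the cyclic position by $\pm 1$ and after $2K$ steps one returns to the starting row; diagonal matrices commute, so the length-$2K$ relations defining $U_{N,K}^*$ are satisfied.

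Next, given the $K$-symmetric $G \subset U_{N,K}^*$ with kernel $J = \ker(C(U_{N,K}^*) \twoheadrightarrow C(G))$, I would define $C(L)$ as the quotient of $C(U_N^K)$ by the smallest closed Hopf $*$-ideal $I$ with $\pi_0(J) \subset M_K(I)$, so that $\pi_0$ descends to $\pi : C(G) \to M_K(C(L))$, $u_{ij} \mapsto \tau[v_{ij}^{(1)}, \ldots, v_{ij}^{(K)}]$. To see that $L$ is cyclically symmetric, I would exploit the $K$-symmetry: the automorphism $\alpha$ of $C(G)$ given by $u_{ij} \mapsto \omega u_{ij}$ (with $\omega = e^{2\pi i/K}$) preserves $J$. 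A direct check yields $\omega\tau = D\tau D^{-1}$ for $D = \mathrm{diag}(1, \omega, \ldots, \omega^{K-1})$, and combining this with the fact that conjugation by $\tau \in M_K(\mathbb{C})$ implements the cyclic shift of the $v^{(i)}$ inside $\pi_0(u_{ij})$, one concludes that the $\mathbb{Z}_K$-action on $U_N^K$ preserves $I$, and hence restricts to $L$.

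Finally, Theorem 4.4 applied to $(L, \sigma)$ with the first-factor representation $v$ provides the stationary model $C([L]) \subset M_K(C(L))$ for $[L]$, the quotient of $L \rtimes \mathbb{Z}_K$ having coordinates $v_{ij} \otimes \tau$. By maximality in the construction of $L$, the surjection $C(G) \twoheadrightarrow \mathrm{Im}(\pi) = C([L])$ is an isomorphism, so $\pi$ is exactly the stationary model from Theorem 4.4. I expect the main obstacle to lie in the construction of $I$: showing that the ideal cut out by $\pi_0(J)$ is simultaneously a closed $*$-ideal, a Hopf ideal, and $\sigma$-stable requires carefully relating the $K$-symmetry of $G$ to the cyclic structure of $U_N^K$ through the intrinsically non-equivariant formula defining $\pi_0$, using both the inner implementation of $\alpha$ by $D$ and the inner implementation of $\sigma$ by $\tau$ on the image.
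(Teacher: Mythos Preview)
Your strategy diverges from the paper's in one essential respect: the paper does not construct $L$ from scratch, but invokes the main result of \cite{bb2}, which states that any $K$-symmetric $G\subset U_{N,K}^*$ already satisfies $C(G)\subset C(L)\rtimes\mathbb Z_K$ for some symmetric $L\subset U_N^K$. That embedding is what delivers $G=[L]$ and hence the faithfulness (and so stationarity) of $\pi$. Your argument attempts to reach the same conclusion by a direct construction, and the gap lies precisely at this point.

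The problematic step is the sentence ``By maximality in the construction of $L$, the surjection $C(G)\twoheadrightarrow\mathrm{Im}(\pi)=C([L])$ is an isomorphism.'' Minimality of $I$ (equivalently, maximality of $L$) only guarantees that $\pi_0$ descends to $\pi:C(G)\to M_K(C(L))$; it says nothing about $\ker\pi$. Concretely, you need $M_K(I)\cap\mathrm{Im}(\pi_0)=\pi_0(J)$, whereas your construction only gives the inclusion $\pi_0(J)\subset M_K(I)\cap\mathrm{Im}(\pi_0)$. If the reverse inclusion failed, $\pi$ would have nontrivial kernel and the model would merely be stationary on its image, not stationary on $C(G)$. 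Since stationarity implies faithfulness (as the paper recalls after Definition~2.2), this is exactly the content of the proposition that remains unproved.

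Note also that this injectivity issue is already nontrivial at the top level: even the statement that $\pi_0:C(U_{N,K}^*)\to M_K(C(U_N^K))$ is faithful is essentially the main theorem of \cite{bb2}, not something recoverable from the diagonal-commutation computation you sketch. So the obstacle you flag at the end (that $I$ be a $\sigma$-stable Hopf ideal) is secondary; the real work, which the paper outsources to \cite{bb2}, is establishing the embedding $C(G)\hookrightarrow C(L)\rtimes\mathbb Z_K$, and your ``maximality'' remark does not substitute for it.
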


\begin{proof}
Assuming that $L\subset U_N^K$ is symmetric in the above sense, we have representations $v^{(i)}:L\subset U_N^K\to U_N^{(i)}$ for any $i$, and the cyclic action $\mathbb Z_K\curvearrowright U_N^K$ restricts into an order $K$ automorphism $\sigma:L\to L$. Thus we can apply Theorem 4.4, and we obtain a certain closed subgroup $[L]\subset U_{N,K}^*$, having a stationary model as in the statement.

Conversely now, assuming that $G\subset U_{N,K}^*$ is $K$-symmetric, the main result in \cite{bb2} applies, and shows that we must have $C(G)\subset C(L)\rtimes\mathbb Z_K$, for a certain closed subgroup $L\subset U_N^K$ which is symmetric. But this shows that we have $G=[L]$, and we are done.
\end{proof}

In general, Proposition 4.6 above does not close the discussion. One interesting modelling question is for instance that concerning $U_N^*$ itself. Indeed, this quantum group is conjectured to be coamenable, cf. \cite{bb3}, and finding any kind of ``generalized matrix model'' for it would probably prove this conjecture, which looks non-trivial.

\section{Quantum permutations}

We discuss in what follows the quantum permutation group case, in connection with some previous work from \cite{ban}, \cite{bfr}, \cite{bne}. We recall that a magic unitary matrix is a square matrix over a $C^*$-algebra, $u\in M_N(A)$,  whose entries are projections, summing up to $1$ on each row and each column. The following key definition is due to Wang \cite{wa2}:

\begin{definition}
$C(S_N^+)$ is the universal $C^*$-algebra generated by the entries of a $N\times N$ magic unitary matrix $u=(u_{ij})$, with the morphisms given by
$$\Delta(u_{ij})=\sum_ku_{ik}\otimes u_{kj}\quad,\quad\varepsilon(u_{ij})=\delta_{ij}\quad,\quad S(u_{ij})=u_{ji}$$
as comultiplication, counit and antipode. 
\end{definition}

This algebra satisfies Woronowicz's axioms, so $S_N^+$ is a compact quantum group, called quantum permutation group. The terminology comes from the fact that we have an inclusion $S_N\subset S_N^+$, coming from the representation $\pi:C(S_N^+)\to C(S_N)$ given by:
$$u_{ij}\to\chi\left(\sigma\in S_N\Big|\sigma(j)=i\right)$$

This inclusion is known to be an isomorphism at $N=2,3$, but not at $N\geq4$, where $S_N^+$ is non-classical, and infinite. Moreover, it is known that we have $S_4^+\simeq SO_3^{-1}$, and that any $S_N^+$ with $N\geq4$ has the same fusion semiring as $SO_3$. See \cite{bi1}, \cite{wa2}.

Any closed subgroup $G\subset S_N^+$ can be thought of as ``acting'' on the set $\{1,\ldots,N\}$, and one can talk about the orbits of this action. The theory here was developed in \cite{bi1}, and also recently in \cite{bfr}. In what follows, we will need the following notion:

\begin{definition}
Given a closed subgroup $G\subset S_N^+$, with fundamental corepresentation $u=(u_{ij})$, consider the equivalence relation on $\{1,\ldots,N\}$ given by $i\sim j$ when $u_{ij}\neq0$.
\begin{enumerate}
\item The equivalence classes for this relation are called orbits of $G$.

\item We call $G$ quasi-transitive when these orbits have the same size.
\end{enumerate} 
\end{definition}

Here the fact that $\sim$ is indeed an equivalence relation comes by applying $\Delta,\varepsilon,S$ to a formula of type $u_{ij}\neq0$. Observe also that in the classical case, $G\subset S_N$, we have $u_{ij}=\chi(\sigma\in G|\sigma(j)=i)$, so we obtain indeed the orbits of the action $G\curvearrowright\{1,\ldots,N\}$. Finally, in the case where we have just one orbit, which amounts in saying that $u_{ij}\neq0$ for any $i,j$, we say that $G$ is transitive. For details here, see \cite{bfr}.

In connection now with matrix models, we have the following construction, which goes back to \cite{ban}, \cite{bne} in the transitive case, and to \cite{bfr} in the general quasi-transitive case:

\begin{definition}
Assume that $G\subset S_N^+$ is quasi-transitive, with orbits of size $K$.
\begin{enumerate}
\item A matrix model $\pi:C(G)\to M_K(C(X))$, mapping $u_{ij}\to P_{ij}^x$, is called quasi-flat when $u_{ij}\neq0$ implies $rank(P_{ij}^x)=1$ for any $x\in X$.
\item The universal quasi-flat model for $C(G)$, obtained via the Tannakian relations defining $C(G)$, is denoted $\pi_G:C(G)\to M_K(C(X_G))$.
\end{enumerate}
\end{definition}

To be more precise, the existence and uniqueness of the universal quasi-flat model is clear for $G=S_N^+$ itself, the model space here being the submanifold $X_N\subset M_N(P^{N-1}_\mathbb C)$, where $P^{N-1}_\mathbb C$ is identified with the space of rank 1 projections in $M_N(\mathbb C)$, defined by the equations stating that the vectors on the rows and columns must be pairwise orthogonal. In the general transitive case $G\subset S_N^+$ we must further impose the Tannakian conditions $T\in Hom(u^{\otimes k},u^{\otimes l})$ which define the quotient algebra $C(S_N^+)\to C(G)$, and this leads to a certain smaller algebraic manifold $X_G\subset X_N$. Finally, in the quasi-transitive case, where $G\subset S_N^+$ has orbits of size $K$, with $N=KM$, a similar construction applies, and we are led to a model space of type $X_G\subset X_K^M\subset X_N$. See \cite{ban}, \cite{bfr}, \cite{bne}.

In relation with stationarity, we first have the following result, coming from \cite{bfr}:

\begin{proposition}
For a quasi-transitive group $G\subset S_N$, with orbits having size $K$, the following are equivalent:
\begin{enumerate}
\item The universal quasi-flat model $\pi:C(G)\to M_K(C(X_G))$ is stationary.

\item The universal quasi-flat model space is non-empty, $X_G\neq\emptyset$.

\item $\exists\,\sigma_1,\ldots,\sigma_K\in G$ such that $\sigma_1(i),\ldots,\sigma_K(i)$ are distinct, $\forall i\in\{1,\ldots,N\}$.
\end{enumerate}
In addition, these conditions are not automatically satisfied, and fail for instance for a certain copy $\mathbb Z_2\times\mathbb Z_2\subset S_6$.
\end{proposition}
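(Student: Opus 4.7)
The plan is to prove (1)$\Rightarrow$(2) as a trivial observation, to argue (2)$\Leftrightarrow$(3) via a simultaneous-diagonalization argument, to close the loop with (3)$\Rightarrow$(1), and finally to exhibit the counterexample. The trivial direction is immediate: if $X_G=\emptyset$ then $M_K(C(X_G))=0$, the universal model is the zero map and cannot satisfy $\int_G=(tr\otimes\int_{X_G})\pi$, since the Haar integral on a non-trivial $C(G)$ is nonzero.

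For the algebraic equivalence (2)$\Leftrightarrow$(3), the key is that $G\subset S_N$ is classical, so the defining relations of $C(G)$ force the rank-$1$ projections $P_{ij}^x$ at any fixed $x\in X_G$ to pairwise commute. Pairwise commuting rank-$1$ self-adjoint projections on $\mathbb C^K$ are simultaneously diagonalizable, so there exist an orthonormal basis $e_1,\ldots,e_K$ of $\mathbb C^K$ and a function $s(\cdot,\cdot)$ defined on $\{(i,j)\mid u_{ij}\neq 0\}$ with values in $\{1,\ldots,K\}$ such that $P_{ij}^x=e_{s(i,j)}e_{s(i,j)}^*$. The magic unitary row and column sums force $s(\cdot,j)$ and $s(i,\cdot)$, restricted to the appropriate orbits of size $K$, to be bijections onto $\{1,\ldots,K\}$, so setting $\sigma_s(j)=i\Leftrightarrow s(i,j)=s$ produces $K$ permutations of $\{1,\ldots,N\}$ with the distinctness property of (3). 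That each $\sigma_s$ lies in $G$ follows by evaluating $\pi$ at the $s$-th diagonal matrix coefficient: this yields an algebra character $C(G)\to\mathbb C$ sending $u_{ij}$ to $\delta_{\sigma_s(j),i}$, i.e. the evaluation at $\sigma_s$, and characters of $C(G)$ for a classical $G$ are exactly the elements of $G$. Conversely, given $\sigma_1,\ldots,\sigma_K\in G$ as in (3), the formula $P_{ij}=\sum_s\delta_{\sigma_s(j),i}\,e_se_s^*$ provides a single point in $X_G$, since $u_{ij}\mapsto P_{ij}$ is the direct sum of the evaluation characters $\mathrm{ev}_{\sigma_s}$ and hence factors through $C(G)$.

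For (3)$\Rightarrow$(1), the finite set $Y\subset G^K$ of admissible tuples carries a free left $G$-action $\sigma\cdot(\sigma_1,\ldots,\sigma_K)=(\sigma\sigma_1,\ldots,\sigma\sigma_K)$ that preserves condition (3). Averaging the single-point models from the previous paragraph over a $G$-orbit $Y'\subset Y$ yields a stationary quasi-flat model $C(G)\to M_K(C(Y'))$: the degree-one check $(tr\otimes\int_{Y'})(\pi(u_{ij}))=\frac{1}{K}[i\sim j]=\int_Gu_{ij}$ reduces to the orbit count via quasi-transitivity, and the higher-degree identities follow from the same translation-invariance. This stationary model factors through the universal $\pi$, and since every $\sigma\in G$ appears as the first component of some admissible tuple, the Hopf image of $\pi$ equals $C(G)$; the standard ``stationary on its image'' output for the universal quasi-flat construction, as formulated in \cite{ban}, \cite{bfr}, then upgrades to outright stationarity of $\pi$.

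The non-automaticity example is $G=\langle(12)(34),(34)(56)\rangle\cong\mathbb Z_2\times\mathbb Z_2\subset S_6$, with orbits $\{1,2\},\{3,4\},\{5,6\}$ all of common size $K=2$. For $K=2$, condition (3) reduces to the existence of a fixed-point-free element $\sigma_2\sigma_1^{-1}\in G$, but each of the three nontrivial elements of $G$ is a product of two disjoint transpositions and hence fixes exactly two points, so no such element exists. The main obstacle in this plan is the stationarity step in (3)$\Rightarrow$(1): while (2)$\Leftrightarrow$(3) and the counterexample are essentially bookkeeping once the simultaneous-diagonalization observation is in place, correctly matching the trace-integral over $X_G$ with the Haar integral on $G$ requires invoking the Hopf-image machinery of \cite{ban}, \cite{bfr}.
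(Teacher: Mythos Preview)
Your proof is correct and follows essentially the same route as the paper. The paper packages the (2)$\Leftrightarrow$(3) argument as a parametrization of quasi-flat models $C(G)\to M_K(\mathbb C)$ by pairs $(P,L)$ with $P$ a rank-$1$ partition of unity and $L$ a sparse Latin square; your simultaneous-diagonalization argument is exactly this parametrization unpacked, with $(e_se_s^*)_s$ playing the role of $P$ and $s(i,j)$ the role of $L$. The counterexample is identical.

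One remark on (3)$\Rightarrow$(1): the Hopf-image machinery you invoke at the end is unnecessary. Once you have produced a stationary quasi-flat model $\pi':C(G)\to M_K(C(Y'))$ by averaging over a $G$-orbit, universality of $X_G$ gives a continuous map $f:Y'\to X_G$ with $\pi'=(id\otimes f^*)\circ\pi$, and pushing forward the uniform measure on $Y'$ along $f$ already yields a probability measure $\mu$ on $X_G$ with
\[
\left(tr\otimes\int_{X_G}\!d\mu\right)\pi=\left(tr\otimes\int_{Y'}\right)\pi'=\int_G.
\]
So $\pi$ is stationary outright, with no need to discuss Hopf images or ``stationary on its image''. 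Your observation that every $\sigma\in G$ appears as a component of some admissible tuple is still true and does witness faithfulness of $\pi$, but faithfulness is a consequence of stationarity rather than an ingredient in proving it.
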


\begin{proof}
The idea here is that the quasi-flat models $\pi:C(G)\to M_K(\mathbb C)$ can be parametrized by pairs $(P,L)$, where $P=(P_1,\ldots,P_K)$ is a partition of the unity of $M_K(\mathbb C)$ with rank 1 projections, and where $L\in M_N(*,1,\ldots,K)$ is a sparse Latin square. With this observation in hand, both $(1)\iff(2)$ and $(2)\iff(3)$ follow. See \cite{bfr}. 

Regarding the last assertion, consider the transpositions $\alpha=(12),\beta=(34),\gamma=(56)$, inside the symmetric group $S_6$. The group $G=\{1,\alpha\beta,\alpha\gamma,\beta\gamma\}$, which is isomorphic to $\mathbb Z_2\times\mathbb Z_2$, is then quasi-transitive, with orbits having size 2. On the other hand, since $G-\{1\}$ contains no derangement, we cannot find elements $\sigma_1,\sigma_2\in G$ such that $\sigma_1(i)\neq\sigma_2(i)$ for any $i$, because this would tell us that $\sigma_2^{-1}\sigma_1\in G-\{1\}$ is a derangement. Thus, the above condition (3) fails for this group $G$, and this finishes the proof.
\end{proof}

In general, the study of the above conditions is something non-trivial. We intend to come back to these questions, in the transitive case, in a forthcoming paper.

In order to investigate now the general case, we will need:

\begin{proposition}
For $G\subset S_N^+$, the algebra $Fix(u)=\{\xi\in\mathbb C^N|u\xi=\xi\}$ consists of the vectors $\xi\in\mathbb C^N\simeq C(1,\ldots,N)$ which are constant on the orbits of $G$. In particular: 
\begin{enumerate}
\item Assuming that $G$ is transitive, in the sense that $u_{ij}\neq0$ for any $i,j$, we have $Fix(u)=\mathbb C\eta$, where $\eta\in\mathbb C^N$ is the all-one vector.

\item Assuming that $G$ is quasi-transitive, we have $Fix(u)=span(\eta_1,\ldots,\eta_M)$, where $\eta_i\in\mathbb C^N=(\mathbb C^K)^{\oplus M}$ has $1$ entries on the $i$-th summand, and $0$ entries elsewhere.
\end{enumerate}
\end{proposition}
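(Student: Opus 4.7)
The plan is to unwind the definition of $Fix(u)$ componentwise and exploit the row orthogonality of the magic unitary $u$. For $\xi\in\mathbb C^N$, the condition $u\xi=\xi\otimes 1$ in $\mathbb C^N\otimes C(G)$ reads $\sum_j u_{ij}\xi_j=\xi_i$ in $C(G)$ for every $i$; combined with the magic unitary row relation $\sum_j u_{ij}=1$, this is equivalent to the system of identities $\sum_j u_{ij}(\xi_j-\xi_i)=0$ holding in $C(G)$ for every $i$.

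Next I would invoke the standard fact that a finite family of projections summing to $1$ in a $C^*$-algebra is automatically pairwise orthogonal (sandwich the relation $\sum_k p_k=1$ by $p_j$ on both sides and use that each $p_jp_kp_j$ is positive). Applied to each row, this gives $u_{ij}u_{ik}=0$ for $j\neq k$, so multiplying the previous identity on the left by $u_{ik}$ collapses the sum to a single term and yields $u_{ik}(\xi_k-\xi_i)=0$. Hence $u_{ik}\neq 0$ forces $\xi_i=\xi_k$, meaning $\xi$ is constant on each orbit of $G$. For the converse, if $\xi$ is orbit-constant then in $\sum_j u_{ij}\xi_j$ only indices $j\sim i$ contribute, and for those $\xi_j=\xi_i$, whence $\sum_j u_{ij}\xi_j=\xi_i\sum_j u_{ij}=\xi_i$, giving $u\xi=\xi\otimes 1$.

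With the orbit-constancy characterization in hand, parts (1) and (2) are immediate corollaries: transitivity means a single orbit, forcing $\xi\in\mathbb C\eta$; and in the quasi-transitive case with $M$ orbits of size $K$, numbered so as to match the decomposition $\mathbb C^N=(\mathbb C^K)^{\oplus M}$, the characteristic vectors $\eta_1,\ldots,\eta_M$ of the orbits form a basis for the space of orbit-constant functions, so $Fix(u)=\mathrm{span}(\eta_1,\ldots,\eta_M)$. The only substantive ingredient is the row-orthogonality of magic unitary entries; the rest of the argument is a direct algebraic manipulation.
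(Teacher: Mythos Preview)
Your argument is correct. The row-orthogonality of the magic unitary entries is exactly the leverage needed, and your forward and backward implications both go through cleanly. One small point worth making explicit: you deduce $\xi_i=\xi_k$ whenever $u_{ik}\neq 0$, and you are implicitly using that the relation $i\sim k\iff u_{ik}\neq 0$ is \emph{already} an equivalence relation (not merely the generating relation for one); the paper states this fact just before the proposition, so you are entitled to it.

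The paper's own proof takes a different, less hands-on route: it identifies $Fix(u)$ with the fixed-point subalgebra $(\mathbb C^N)^\alpha$ of the standard coaction $\alpha(\delta_i)=\sum_j u_{ij}\otimes\delta_j$, and then simply invokes Bichon's general result from \cite{bi1} that this fixed-point algebra consists of the functions constant on the orbits. Your approach is more elementary and self-contained --- you reprove the cited fact directly from the magic-unitary axioms --- whereas the paper's version is shorter but relies on an external reference. Both arrive at (1) and (2) in the same immediate way once the orbit-constancy characterization is in hand.
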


\begin{proof}
Consider the standard coaction $\alpha:\mathbb C^N\to C(G)\otimes\mathbb C^N$, given by $\alpha(\delta_i)=\sum_ju_{ij}\otimes\delta_j$. The algebra $Fix(u)$ is then the fixed point algebra of this coaction, namely:
$$(\mathbb C^N)^\alpha=\left\{\xi\in\mathbb C^N\big|\alpha(\xi)=1\otimes\xi\right\}$$

On the other hand, the general results in \cite{bi1} show that, via the identification $\mathbb C^N=C(1,\ldots,N)$ from the statement, this latter algebra is given by:
$$(\mathbb C^N)^\alpha=\left\{\xi\in C(1,\ldots,N)\big|i\sim j\implies \xi(i)=\xi(j)\right\}$$

Thus, we obtain the result, and then its particular cases (1,2), as stated.
\end{proof}

With the above result in hand, we can now prove:

\begin{theorem}
Assuming that $G\subset S_N^+$ is quasi-transitive, with orbits having size $K$, any stationary model $\pi:C(G)\to M_K(C(X))$ with $X$ connected is automatically quasi-flat.
\end{theorem}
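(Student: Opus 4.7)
The plan is to extract information from stationarity at the level of the entries $u_{ij}$, match it with the explicit knowledge of $\int_G u_{ij}$ provided by Proposition 5.5, and then exploit the connectedness of $X$ to pin down the rank of each projection $P_{ij}^x=\pi(u_{ij})(x)$.

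First I would identify $\int_G u_{ij}$. Since the Haar integration applied entrywise to a corepresentation yields the orthogonal projection onto its fixed space, Proposition 5.5 gives the value
$$\int_G u_{ij}=\begin{cases}1/K & \text{if }i\sim j\\ 0 & \text{otherwise,}\end{cases}$$
because the projection onto $\mathrm{Fix}(u)=\mathrm{span}(\eta_1,\dots,\eta_M)$ is block-diagonal with blocks $\tfrac{1}{K}J_K$ in the basis ordered by orbits. The case $i\not\sim j$ gives $u_{ij}=0$ in $C(G)$ by the definition of orbits, so $P_{ij}^x=0$ identically and there is nothing to check.

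Next, for $i\sim j$, I would apply the stationarity hypothesis to the matrix entry $u_{ij}$, obtaining
$$\frac{1}{K}=\int_G u_{ij}=\left(tr\otimes\int_X\right)\pi(u_{ij})=\int_X tr(P_{ij}^x)\,dx.$$
Since each $P_{ij}^x$ is a projection in $M_K(\mathbb{C})$, we have $tr(P_{ij}^x)=\mathrm{rank}(P_{ij}^x)/K$, so the above identity becomes
$$\int_X \mathrm{rank}(P_{ij}^x)\,dx=1.$$

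Here comes the decisive step, using connectedness. The map $x\mapsto P_{ij}^x$ is norm-continuous (as $\pi(u_{ij})\in M_K(C(X))$), hence $x\mapsto tr(P_{ij}^x)$ is a continuous function on $X$. Multiplying by $K$, the function $x\mapsto \mathrm{rank}(P_{ij}^x)$ is a continuous, integer-valued function on the connected space $X$, and therefore constant. Call this constant value $r_{ij}$. The integral identity then reads $r_{ij}=1$, which is exactly the quasi-flatness condition for the pair $(i,j)$.

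There is essentially no obstacle here; the argument is short once Proposition 5.5 is in hand. The only mildly delicate point is the continuity-plus-connectedness argument for the rank, which depends in a crucial way on both the hypothesis that $X$ is connected and on $tr$ being the \emph{normalized} trace so that $\mathrm{rank}(P_{ij}^x)=K\cdot tr(P_{ij}^x)$ is a genuine continuous function valued in $\{0,1,\dots,K\}$. Without connectedness the rank could jump between orbit-values, producing non-quasi-flat stationary models, so this hypothesis is genuinely needed.
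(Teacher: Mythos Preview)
Your proof is correct and follows essentially the same route as the paper's: apply stationarity to $u_{ij}$, use Proposition 5.5 to identify $\int_G u_{ij}=1/K$ when $u_{ij}\neq 0$, and then argue that the continuous integer-valued function $x\mapsto\mathrm{rank}(P_{ij}^x)$ is constant on the connected space $X$. The paper phrases the last step as ``$x\to tr(P_{ij}^x)$ is locally constant,'' which is exactly your continuity-plus-integer-values observation.
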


\begin{proof}
If we denote the matrix model map by $u_{ij}\to P_{ij}^x$, the stationarity assumption, applied on the standard coordinates, shows that we have:
$$\int_Gu_{ij}=\int_Xtr(P_{ij}^x)dx$$

We use now the well-known fact, coming from \cite{wo1}, that the matrix $Q=(\int_Gu_{ij})_{ij}$ formed by the elements on the left is the orthogonal projection onto $Fix(u)$. By combining this observation with the results in Proposition 5.5, we succesively conclude that:

-- In the transitive case we have $Q=\frac{1}{N}J_N$, where $J_N$ is the all-one matrix. 

-- In the general case, we have $Q=(\frac{1}{K}J_K)^{\oplus M}$, where $M=N/K$. 

With these formulae in hand, by getting now back to our equality coming from the stationarity condition, this simply becomes:
$$u_{ij}\neq0\implies\int_Xtr(P_{ij}^x)dx=\frac{1}{K}$$

Now since the functions $x\to tr(P_{ij}^x)$ are locally constant, since $X$ was assumed to be connected, this condition tells us that the model is quasi-flat, as claimed.
\end{proof}

These results suggest the following refinement of Conjecture 3.4:

\begin{conjecture}
For a closed subgroup $G\subset S_N^+$, with orbits having size $K$, and satisfying a supplementary transitivity type assumption, the following are equivalent:
\begin{enumerate}
\item $C(G)$ is of type I.

\item $C(G)$ has a stationary model.

\item $C(G)$ has a stationary $K\times K$ model.

\item The universal quasi-flat model for $C(G)$ is stationary.
\end{enumerate}
\end{conjecture}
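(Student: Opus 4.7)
The forward cycle $(4)\Rightarrow(3)\Rightarrow(2)\Rightarrow(1)$ is the easy half and does not use the supplementary transitivity assumption. By Definition 5.3 a quasi-flat model on a quasi-transitive $G$ with orbits of size $K$ is automatically of matrix size $K$, giving $(4)\Rightarrow(3)$; the implication $(3)\Rightarrow(2)$ is vacuous; and $(2)\Rightarrow(1)$ follows from the observation recorded after Definition 2.2 that any stationary matrix model is automatically faithful, and hence exhibits $C(G)$ as a type I algebra in the sense of Conjecture 3.4.

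The substantive content is the chain $(1)\Rightarrow(3)\Rightarrow(4)$. For $(3)\Rightarrow(4)$, given a stationary $K\times K$ model $\pi:C(G)\to M_K(C(X))$, the plan is to invoke (an extension of) Theorem 5.6 to deduce quasi-flatness; by the universal property of $\pi_G$ in Definition 5.3(2), $\pi$ factors through $\pi_G:C(G)\to M_K(C(X_G))$, and the pushforward of the probability measure on $X$ along the induced map $X\to X_G$ then witnesses the stationarity of $\pi_G$, verified directly against $\int_G=(tr\otimes\int_X)\pi$. Handling disconnected $X$ requires a localization step to extract a fibre on which the rank function $x\mapsto\mathrm{rank}(P_{ij}^x)$ is constant. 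For $(1)\Rightarrow(3)$, the plan is to start from a faithful embedding $\rho:C(G)\subset M_L(C(Y))$ supplied by the type I hypothesis, and compress it down to a $K\times K$ model. The trace matrix $Q=(\int_G u_{ij})_{ij}$, identified via Proposition 5.5 as $(\tfrac{1}{K}J_K)^{\oplus M}$, constrains the block structure of $\rho$ along the orbits; the supplementary transitivity hypothesis should be chosen so as to allow a clean splitting of the $L$-dimensional fibres into isomorphic $K$-dimensional pieces, from which a single faithful $K\times K$ summand can be retained.

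The main obstacle, and the reason for stating the result as a conjecture rather than a theorem, is the identification and exploitation of the supplementary transitivity hypothesis. The classical obstruction $\mathbb Z_2\times\mathbb Z_2\subset S_6$ of Proposition 5.4 already shows that some such hypothesis is genuinely needed, and a natural candidate is the quantum analogue of Proposition 5.4(3), asserting the existence of sufficiently many ``derangement-like'' elements in $G$. In the quantum setting one must additionally contend with the subtleties visible in Proposition 3.3, which prevent a naive reduction to cleft sequences; the delicate interaction between the algebraic requirement of type I and the combinatorial constraint of a rank-$1$ fibrewise model is precisely where the bulk of the technical work would lie.
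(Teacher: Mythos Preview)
Your treatment of the easy chain $(4)\Rightarrow(3)\Rightarrow(2)\Rightarrow(1)$ and your handling of $(3)\Rightarrow(4)$ via Theorem 5.6 match the paper's discussion; in fact your $(3)\Rightarrow(4)$ is more explicit than the paper's one-line citation, spelling out the factorization through $X_G$ and the pushforward of the measure. Where you diverge is in the organization of the hard direction. The paper does not attempt $(1)\Rightarrow(3)$ directly: it decomposes it as $(1)\Rightarrow(2)$, which is exactly the content of the earlier Conjecture 3.4 and is expected to hold with no supplementary assumption, followed by $(2)\Rightarrow(3)$, which the paper isolates as the genuinely new conjectural ingredient. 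In that framing the ``supplementary transitivity type assumption'' is invoked solely to pass from an arbitrary stationary model to one of the correct matrix size $K$, with Proposition 5.4 supplying the classical template. Your plan to compress a type I embedding $C(G)\subset M_L(C(Y))$ straight down to size $K$ conflates these two layers; it is not incorrect as a strategy, but it obscures that half of the difficulty, namely $(1)\Rightarrow(2)$, is already packaged in a separate and more general conjecture, while the specifically combinatorial obstruction you cite (the $\mathbb Z_2\times\mathbb Z_2\subset S_6$ example) lives entirely in the step $(2)\Rightarrow(3)$. The paper's decomposition buys a cleaner separation of concerns; yours would, if it worked, bypass Conjecture 3.4 in this special case, but at the cost of mixing the analytic and combinatorial obstructions into a single unresolved step.
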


In this statement $(4)\implies(3)\implies(2)\implies(1)$ are trivial, $(1)\implies(2)$ is expected to hold without assumptions, as stated in Conjecture 3.4, and $(3)\implies(4)$ does hold, under a mild assumption, as shown by Theorem 5.6. Thus, the conjecture is that, under suitable assumptions, we should have $(2)\implies(3)$. In the classical case the needed assumptions are those in Proposition 5.4. In the general case, however, we do not know what the correct assumptions are, and this statement is the best one that we have.

\section{Uniform groups}

In this section we discuss the group dual case, $\widehat{\Gamma}\subset S_N^+$, with $\Gamma$ being classical. These group duals were classified by Bichon in \cite{bi1}, the result being as follows:

\begin{proposition}
The group duals $\widehat{\Gamma}\subset S_N^+$ appear as follows:
\begin{enumerate}
\item Given integers $K_1,\ldots,K_M$ satisfying $K_1+\ldots+K_M=N$, the dual of any quotient group $\mathbb Z_{K_1}*\ldots*\mathbb Z_{K_M}\to\Gamma$ appears as a closed subgroup $\widehat{\Gamma}\subset S_N^+$. 

\item By refining if necessary the partition $N=K_1+\ldots+K_M$, we can always assume that the $M$ morphisms $\mathbb Z_{K_i}\to\Gamma$ are all injective.

\item Assuming that the partition $N=K_1+\ldots+K_M$ is refined, as above, this partition is precisely the one describing the orbit structure of $\widehat{\Gamma}\subset S_N^+$.

\item Modulo a conjugation by a permutation matrix $W\in S_N$, we obtain in this way all the group dual subgroups $\widehat{\Gamma}\subset S_N^+$.
\end{enumerate}
\end{proposition}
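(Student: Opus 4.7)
The plan is to dispatch the four items in order, with (1) a direct construction, (2) and (3) as straightforward refinements, and (4) being the substantive content. For (1), given a quotient $\mathbb{Z}_{K_1} * \cdots * \mathbb{Z}_{K_M} \twoheadrightarrow \Gamma$ with images $g_i \in \Gamma$ of the canonical generators, I would form, for each $i$, the spectral projections
$$P_j^{(i)} = \frac{1}{K_i}\sum_{k=0}^{K_i-1} \zeta_{K_i}^{-jk} g_i^k \in C^*(\Gamma), \quad 0 \le j < K_i,$$
which make sense because $g_i^{K_i} = 1$. These are pairwise orthogonal projections summing to $1$, so the circulant $U^{(i)}_{ab} = P^{(i)}_{a-b}$ is a $K_i \times K_i$ magic unitary. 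The block-diagonal matrix $U = U^{(1)} \oplus \cdots \oplus U^{(M)} \in M_N(C^*(\Gamma))$ is then a magic unitary, and a direct calculation using $\Delta(g_i^k) = g_i^k \otimes g_i^k$ confirms that the universal map $C(S_N^+) \to C^*(\Gamma)$, $u_{ab} \mapsto U_{ab}$, intertwines the Hopf $*$-algebra structures. Surjectivity is then immediate, since $\Gamma$ is generated by the $g_i$'s, each of which is recovered as $g_i = \sum_j \zeta_{K_i}^j P_j^{(i)}$.

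For (2), if some $\phi_i : \mathbb{Z}_{K_i} \to \Gamma$ has image of order $K_i' \mid K_i$ strictly smaller than $K_i$, a short spectral calculation shows that $P_j^{(i)} = 0$ unless $(K_i/K_i') \mid j$, and hence the $K_i \times K_i$ block is unitarily equivalent, via a permutation of coordinates, to $K_i/K_i'$ copies of the $K_i' \times K_i'$ circulant built from the image of $g_i$ viewed as having exact order $K_i'$. Replacing the piece $K_i$ of the partition by $K_i/K_i'$ copies of $K_i'$ therefore yields an embedding in which all the new morphisms are injective, without changing the underlying group dual subgroup (up to the coordinate permutation). Item (3) then drops out of the explicit form of $U$: the nonzero pattern of $(U_{ab})$ is exactly the block-diagonal pattern, and when each $\phi_i$ is injective every $P_j^{(i)}$ is nonzero, so the orbits of $\widehat\Gamma \subset S_N^+$ coincide with the blocks of the refined partition.

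The main obstacle is (4), the converse. Given $\widehat\Gamma \subset S_N^+$ via a surjection $\pi : C(S_N^+) \to C^*(\Gamma)$, write $p_{ab} := \pi(u_{ab}) = \sum_g c_{ab}(g)\, g$, and compare $\Delta(p_{ab}) = \sum_k p_{ak} \otimes p_{kb}$ with the group-like coproduct $\Delta(g) = g \otimes g$: the matrices $C(g) := (c_{ab}(g))_{a,b}$ must then satisfy $C(g)C(h) = \delta_{g,h} C(g)$, forming a family of orthogonal idempotents indexed by $\{ g \in \Gamma : C(g) \neq 0 \}$. Combining this with the self-adjointness and idempotency of each $p_{ab}$ and with the row/column sum conditions of the magic unitary, one extracts an equivalence relation on $\{1,\ldots,N\}$ (the eventual orbits) together with, on each class of size $K_i'$, a unique cyclic generator $g_i \in \Gamma$ of order $K_i'$ for which the restricted block of $(p_{ab})$ is the spectral circulant of $g_i$. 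Surjectivity of $\pi$ then forces $\Gamma$ to be generated by $g_1, \ldots, g_M$, exhibiting it as a quotient of $\mathbb{Z}_{K_1'} * \cdots * \mathbb{Z}_{K_M'}$ and completing the classification. I expect the hardest step to be the extraction of the permutation $W \in S_N$ and the cyclic generators from the orthogonal-idempotent system $\{C(g)\}$; this is essentially the structural lemma for magic unitaries in group $C^*$-algebras worked out in \cite{bi1}.
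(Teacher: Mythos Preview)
Your proposal is correct and follows the same route as the paper. For (1) your spectral-projection circulant $U^{(i)}_{ab}=P^{(i)}_{a-b}$ is exactly the paper's Fourier-matrix block $u_i$ written in different notation (the paper's $(F_iV_i)_c$, after the outer $1/\sqrt{K_i}$, is precisely your $P^{(i)}_{-c}$), and for (2)--(4) both you and the paper ultimately defer to Bichon's classification in \cite{bi1}; you simply supply more of the intermediate detail (the refinement computation in (2) and the orthogonal-idempotent system $C(g)C(h)=\delta_{g,h}C(g)$ in (4)) than the paper's one-line sketch. One minor wording issue: in (4) the generator $g_i$ on each orbit block is not literally ``unique'' --- different cyclic orderings of the block produce different primitive generators of the same cyclic subgroup --- but the subgroup itself is canonical, which is all you need.
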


\begin{proof}
The idea for (1) is that we have embeddings $\widehat{\mathbb Z}_{K_i}\simeq\mathbb Z_{K_i}\subset S_{K_i}\subset S_{K_i}^+$, and by performing a free product construction, we obtain an embedding as follows:
$$\widehat{\Gamma}\subset\widehat{\mathbb Z_{K_1}*\ldots*\mathbb Z_{K_M}}\subset S_N^+$$

To be more precise, the magic unitary that we get is as follows, where $F_i=\frac{1}{\sqrt{K_i}}(w_i^{ab})_{ab}$ with $w_i=e^{2\pi i/K_i}$, and  $V_i=(g_i^a)_a$, with $g_i$ being the standard generator of $\mathbb Z_{K_i}$:
$$u=diag(u_i)\quad,\quad u_i=\frac{1}{\sqrt{K_i}}\begin{pmatrix}(F_iV_i)_0&\ldots&(F_iV_i)_{K_i-1}\\ (F_iV_i)_{K_i-1}&\ldots&(F_iV_i)_{K_i-2}\\ \ldots&\ldots&\ldots\\ (F_iV_i)_1&\ldots&(F_iV_i)_0\end{pmatrix}$$

Regarding (2,3,4), the idea here is that the orbit structure of any $\widehat{\Gamma}\subset S_N^+$ produces a partition $N=K_1+\ldots+K_M$, and then a quotient map $\mathbb Z_{K_1}*\ldots*\mathbb Z_{K_M}\to\Gamma$. See \cite{bi1}.
\end{proof}

Regarding now the quasi-transitive case, and our modelling questions, we have:

\begin{proposition}
The quasi-transitive group duals $\widehat{\Gamma}\subset S_N^+$, with orbits having $K$ elements, have the following properties:
\begin{enumerate}
\item These come from the quotients $\mathbb Z_K^{*M}\to\Gamma$, having the property that the corresponding $M$ morphisms $\mathbb Z_K^{(i)}\subset\mathbb Z_K^{*M}\to\Gamma$ are all injective.

\item For such a quotient, a matrix model $\pi:C^*(\Gamma)\to M_K(\mathbb C)$ is quasi-flat if and only if it is stationary on each subalgebra $C^*(\mathbb Z_K^{(i)})\subset C^*(\Gamma)$.
\end{enumerate}
\end{proposition}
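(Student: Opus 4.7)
For part (1), I plan to simply specialize Proposition 6.1 to the quasi-transitive case. Parts (2)--(4) of that proposition present (after conjugating by a permutation matrix) $\widehat\Gamma$ as coming from a quotient $\mathbb Z_{K_1}*\cdots*\mathbb Z_{K_M}\twoheadrightarrow\Gamma$ with each $\mathbb Z_{K_i}\hookrightarrow\Gamma$ injective, and with the $K_i$ being precisely the orbit sizes. The quasi-transitivity hypothesis that all orbits have size $K$ then forces $K_1=\cdots=K_M=K$, which is exactly (1).

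For part (2), the first step is to unpack the explicit magic unitary described in Proposition 6.1(1). A direct calculation with the displayed circulant-of-shifts formula shows that in the $i$-th diagonal $K\times K$ block the $(j,l)$-entry is
$$u^{(i)}_{jl}\;=\;\frac{1}{K}\sum_{a=0}^{K-1}w^{a(l-j)}g_i^a\;=:\;p_{l-j}(g_i),\qquad w=e^{2\pi i/K},$$
so these entries are precisely the minimal central projections of $C^*(\mathbb Z_K^{(i)})\cong\mathbb C^K$, Fourier-indexed by the characters of $\mathbb Z_K^{(i)}$. Since $\mathbb Z_K^{(i)}\hookrightarrow\Gamma$ is injective by (1) and $\mathbb Z_K$ is amenable, the induced $*$-algebra map $C^*(\mathbb Z_K^{(i)})\hookrightarrow C^*(\Gamma)$ is also injective, so every $p_m(g_i)$, hence every $u^{(i)}_{jl}$, is nonzero in $C^*(\Gamma)$. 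Under a model $\pi:C^*(\Gamma)\to M_K(\mathbb C)$, writing $U_i:=\pi(g_i)$, the image $\pi(p_m(g_i))$ is by construction the spectral projection of the unitary $U_i$ onto its $w^{-m}$-eigenspace.

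With this dictionary in hand, quasi-flatness of $\pi$ on the $i$-th block becomes the statement that each of the $K$ spectral projections of $U_i$ has rank at most $1$; since they sum to $I_K$ they must then all have rank exactly $1$, i.e.\ $U_i$ has all $K$ distinct $K$-th roots of unity as eigenvalues. On the other hand, stationarity of $\pi|_{C^*(\mathbb Z_K^{(i)})}$ is precisely the condition $\mathrm{tr}(U_i^a)=\delta_{a\equiv 0\bmod K}$ for $0\le a<K$; combined with $U_i^K=I$, inverse discrete Fourier transform applied to the eigenvalue multiplicity vector of $U_i$ shows that the vanishing of these power sums for $a=1,\dots,K-1$ is equivalent to each $K$-th root of unity occurring with multiplicity exactly one among the eigenvalues of $U_i$. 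Both conditions thus reduce to the same spectral criterion on each $U_i$, giving the desired equivalence. The only step that requires any care is the initial identification of the block entries with Fourier projections from the circulant-of-shifts form in Proposition 6.1(1); the rest is elementary linear algebra with roots of unity, and I do not anticipate any serious obstacle.
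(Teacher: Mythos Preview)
Your proposal is correct and follows essentially the same route as the paper: part (1) is handled identically by specializing Proposition 6.1, and for part (2) the paper likewise identifies the nonzero block entries as $\frac{1}{\sqrt K}(FV_i)_c$ with $F$ the $K\times K$ Fourier matrix and $V_i=(g_i^a)_a$, and then shows that quasi-flatness amounts, after Fourier inversion of the trace vector, to $Tr(U_i^a)=K\delta_{a,0}$ for all $a$, i.e.\ to stationarity on each $C^*(\mathbb Z_K^{(i)})$. Your spectral-projection / eigenvalue-multiplicity language is just a repackaging of this same Fourier computation rather than a genuinely different argument.
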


\begin{proof}
The first assertion follows from Proposition 6.1. Regarding the second assertion, consider an arbitrary matrix model $\pi:C^*(\Gamma)\to M_K(\mathbb C)$, mapping $g_i\to U_i$, where $g_i$ is the standard generator of $\mathbb Z_K^{(i)}$. With notations from the proof of Proposition 6.1, the images of the nonzero standard coordinates on $\widehat{\Gamma}\subset S_N^+$ are mapped as follows:
$$\pi:\frac{1}{\sqrt{K}}(FV_i)_c\to\frac{1}{\sqrt{K}}(FW_i)_c$$

Here $V_i=(g_i^a)_a$, $W_i=(U_i^a)_a$, and $F=\frac{1}{\sqrt{K}}(w^{ab})_{ab}$ with $w=e^{2\pi i/K}$. With this formula in hand, the flatness condition on $\pi$ simply states that we must have:
$$Tr((FW_i)_c)=\sqrt{K}\quad,\quad\forall i,\forall c$$

In terms of the trace vectors $T_i=(Tr(U_i^a))_a$ this condition becomes $FT_i=\sqrt{K}\xi$, where $\xi\in\mathbb C^K$ is the all-one vector. Thus we must have $T_i=\sqrt{K}F^*\xi$, which reads:
$$\begin{pmatrix}Tr(1)\\ Tr(U_i)\\\ldots\\ Tr(U_i^{K-1})\end{pmatrix}=\sqrt{K}F^*\begin{pmatrix}1\\ 1\\\ldots\\1\end{pmatrix}=\begin{pmatrix}K\\ 0\\\ldots\\0\end{pmatrix}\quad,\quad\forall i$$
 
In other words, we have reached to the conclusion that $\pi$ is flat precisely when its restrictions to each subalgebra $C^*(\mathbb Z_K^{(i)})\subset C^*(\Gamma)$ are stationary, as claimed.
\end{proof}

We would like to end our study with a purely group-theoretical formulation of these results, and of some related questions, that we believe of interest. Let us start with:

\begin{definition}
A discrete group $\Gamma$ is called uniform when:
\begin{enumerate}
\item $\Gamma$ is finitely generated, $\Gamma=<g_1,\ldots,g_M>$.

\item The generators $g_1,\ldots,g_M$ have common order $K<\infty$.

\item $\Gamma$ appears as an intermediate quotient $\mathbb Z_K^{*M}\to\Gamma\to\mathbb Z_K^M$.

\item We have an action $S_M\curvearrowright\Gamma$, given by $\sigma(g_i)=g_{\sigma(i)}$.
\end{enumerate}
\end{definition}

Here the conditions (1-3) basically come from Bichon's work \cite{bi1}, via Proposition 6.2 (1) above, and together with some extra considerations from \cite{bfr}, which prevent us from using groups of type $\Gamma=(\mathbb Z_K*\mathbb Z_K)\times\mathbb Z_K$, we are led to the condition (4) as well.

Observe that some of the above conditions are technically redundant, with (4) implying that the generators $g_1,\ldots,g_M$ have common order, as stated in (2), and also with (3) implying that the group is finitely generated, with generators having finite order.

We have as well the following notion, which is once again group-theoretical:

\begin{definition}
If $\Gamma$ is uniform, as above, a unitary representation $\rho:\Gamma\to U_K$ is called quasi-flat when the eigenvalues of each $U_i=\rho(g_i)\in U_K$ are uniformly distributed.
\end{definition}

To be more precise, assuming that $\Gamma=<g_1,\ldots,g_M>$ with $ord(g_i)=K$ is as in Definition 6.3, any unitary representation $\rho:\Gamma\to U_K$ is uniquely determined by the images $U_i=\rho(g_i)\in U_K$ of the standard generators. Now since each of these unitaries satisfies $U_i^K=1$, its eigenvalues must be among the $K$-th roots of unity, and our quasi-flatness assumption states that each eigenvalue must appear with multiplicity $1$.

With these notions in hand, we have the following result:

\begin{theorem}
If $\Gamma=<g_1,\ldots,g_M>$ is uniform, with $ord(g_i)=K$, a matrix model $$\pi:C^*(\Gamma)\to M_K(C(X))$$
is quasi-flat in the sense of Definition 5.3 precisely when the associated unitary representation $\rho:\Gamma\to C(X,U_K)$ has quasi-flat fibers, in the sense of Definition 6.4.
\end{theorem}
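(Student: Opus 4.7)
My strategy is to reduce the claimed equivalence to a pointwise application of Proposition 6.2 (2), since both conditions in the theorem are fiberwise in nature. The first step will be to observe that quasi-flatness in the sense of Definition 5.3 is, by construction, a pointwise rank-$1$ condition on the projections $\pi(u_{ij})(x) \in M_K(\mathbb C)$ associated with the nonzero standard coordinates of $\widehat\Gamma \subset S_N^+$. Thus $\pi$ is quasi-flat if and only if, for every $x \in X$, the scalar-valued evaluation
$$\pi_x : C^*(\Gamma) \to M_K(\mathbb C), \qquad g_i \mapsto U_i(x) := \rho(g_i)(x) \in U_K,$$
is quasi-flat in the same sense.

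Second, I will invoke Proposition 6.2 (2) at each fixed $x$. That result identifies quasi-flatness of such a scalar model with stationarity of its restriction to each cyclic subalgebra $C^*(\mathbb Z_K^{(i)}) \subset C^*(\Gamma)$, which unwinds into the family of trace conditions
$$\mathrm{tr}\bigl(U_i(x)^a\bigr) = \delta_{a,0}, \qquad i = 1,\dots,M,\ a = 0,1,\dots,K-1.$$

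Third, I will translate these trace conditions into the eigenvalue distribution condition of Definition 6.4. Since each $U_i(x)$ satisfies $U_i(x)^K = 1$, its spectrum lies in $\{\omega^j : j = 0,\dots,K-1\}$ with $\omega = e^{2\pi i/K}$; writing $m_{i,j}(x)$ for the multiplicity of $\omega^j$ as an eigenvalue of $U_i(x)$, one has
$$\mathrm{tr}\bigl(U_i(x)^a\bigr) = \frac{1}{K}\sum_{j=0}^{K-1} m_{i,j}(x)\,\omega^{aj}.$$
By discrete Fourier inversion on $\mathbb Z_K$, the system $\mathrm{tr}(U_i(x)^a) = \delta_{a,0}$ for all $a$ admits the unique nonnegative integer solution $m_{i,j}(x) = 1$ for every $j$, which is exactly the condition that $\rho$ has quasi-flat fibers at $x$ in the sense of Definition 6.4. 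Running both directions of this equivalence pointwise in $x$ gives the theorem.

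I do not expect a substantial obstacle, as the argument is essentially a parametric reading of Proposition 6.2 (2) plus the elementary Fourier calculation above. The one point that wants careful bookkeeping is the pointwise reduction in the first step, which rests on the explicit formula $\pi: \tfrac{1}{\sqrt K}(FV_i)_c \mapsto \tfrac{1}{\sqrt K}(FW_i)_c$ recorded in the proof of Proposition 6.2: evaluated at $x \in X$, the rank of $(FW_i(x))_c$ is precisely what Definition 5.3 constrains, and it equals $1$ simultaneously for all $i,c$ if and only if the scalar model $\pi_x$ is quasi-flat.
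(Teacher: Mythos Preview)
Your proof is correct and follows essentially the same route as the paper: both reduce the claim to the trace identities $\mathrm{tr}(U_i^a)=\delta_{a,0}$ via Proposition 6.2 (2), and then translate these into the uniform eigenvalue distribution of Definition 6.4. The only cosmetic difference is that you make the pointwise reduction over $x\in X$ explicit before invoking Proposition 6.2 (2), whereas the paper applies the proposition directly to the $C(X)$-valued model and reads the resulting trace identity as an equation in $C(X)$; these amount to the same computation.
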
 

\begin{proof}
According to Proposition 6.2 (2) above, the model is quasi-flat precisely when the compositions $\pi_i:C^*(\mathbb Z_K^{(i)})\subset C^*(\Gamma)\to M_K(C(X))$ are all stationary. 

On the other hand, as already observed in the proof of Proposition 6.2, a matrix model $\rho:C^*(\mathbb Z_K)\to M_K(C(X))$ is stationary precisely when the unitary $U=\rho(g)$, where $g$ is the standard generator of $\mathbb Z_K$, satisfies the following condition:
$$\begin{pmatrix}tr(1)\\ tr(U)\\\ldots\\ tr(U^{K-1})\end{pmatrix}=\begin{pmatrix}1\\ 0\\\ldots\\0\end{pmatrix}$$

Thus, such a model is stationary precisely when the eigenvalues of $U$ are uniformly distributed, over the $K$-th roots of unity. We conclude that $\pi$ is quasi-flat precisely when the eigenvalues of each $U_i=\rho(g_i)$ are uniformly distributed, as in Definition 6.4.
\end{proof}

Finally, we have the following conjecture, which would refine Conjecture 5.7:

\begin{conjecture}
Assuming that $\mathbb Z_K^{*M}\to\Gamma\to\mathbb Z_K^M$ is uniform, in some strong sense:
\begin{enumerate}
\item The model space $X_G$ for the group dual $G=\widehat{\Gamma}\subset S_N^+$ is an homogeneous space.

\item In the virtually abelian case, the Haar measure on $X_G$ produces the stationarity.
\end{enumerate}
\end{conjecture}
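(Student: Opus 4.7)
The strategy is to describe $X_G$ explicitly using Theorem 6.5, then exhibit a natural transitive Lie group action for part (1), and finally analyze the push-forward of Haar measure for part (2). By Theorem 6.5, the points of $X_G$ correspond to tuples $(U_1,\ldots,U_M)\in U_K^M$ of quasi-flat unitaries, meaning that each $U_i$ has spectrum equal to $\{1,\omega,\ldots,\omega^{K-1}\}$ with every eigenvalue simple, where $\omega=e^{2\pi i/K}$, and subject to the relations defining the quotient $\mathbb Z_K^{*M}\to\Gamma$. In the free-product base case $\Gamma=\mathbb Z_K^{*M}$, the manifold of quasi-flat unitaries in $U_K$ is a single $U_K$-conjugacy class, namely $U_K/T$ with $T\subset U_K$ the diagonal maximal torus; hence $X_{\widehat{\mathbb Z_K^{*M}}}=(U_K/T)^M$ is homogeneous under componentwise conjugation by $U_K^M$, further enlarged by the $S_M$-action of Definition 6.3(4).

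For Part (1) in the general uniform case, I would show that under a sufficiently strong uniformity hypothesis, the Tannakian subvariety $X_G\subset(U_K/T)^M$ cut out by the relations of $\Gamma$ remains a single orbit under the subgroup $H\subset U_K^M\rtimes S_M$ of symmetries that preserve these relations. The precise form of this hypothesis is itself part of what the conjecture is asking to pin down; natural candidates are that $S_M\curvearrowright\Gamma$ be transitive on some naturally associated invariant, or that the corresponding representation variety be irreducible.

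For Part (2), assume moreover that $\Gamma$ is virtually abelian. By Thoma's theorem there is a stationary model $\pi:C^*(\Gamma)\to M_\Phi(C(L))$, and I would show that its data can be reinterpreted as integration against the Haar measure $\mu_G$ on the homogeneous space $X_G$. The key computation is on the standard coordinates: using the explicit Bichon formulae of Proposition 6.1 together with Schur orthogonality for matrix coefficients on the compact homogeneous space $U_K/T$, one should obtain $\int_{X_G}\mathrm{tr}(\pi_G(u_{ij}))\,d\mu_G=\int_{\widehat\Gamma}u_{ij}$, namely the appropriate delta function at the identity. Cosemisimplicity of $\mathcal R(\widehat\Gamma)$ then extends the equality from generators to all of $C^*(\Gamma)$, giving stationarity.

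The main obstacle is twofold. First, formulating the ``strong uniformity'' assumption precisely: the negative example $\mathbb Z_2\times\mathbb Z_2\subset S_6$ of Proposition 5.4 already shows that the combinatorial conditions of Definition 6.3 are insufficient, and some irreducibility or connectedness condition on the associated representation variety is needed in order to rule out disconnected components of $X_G$. Second, even after homogeneity is established, identifying the Haar measure on $X_G$ with the abstract Thoma integration measure on $\widehat\Lambda$ should require analyzing an equivariant fibration $X_G\to\widehat\Lambda$ together with a fibrewise harmonic-analytic computation, which becomes delicate when the abelian part $\Lambda\subset\Gamma$ is large compared to the finite quotient $\Phi=\Gamma/\Lambda$.
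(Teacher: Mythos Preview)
The statement you have attempted is Conjecture 6.6, and the paper does \emph{not} prove it. Immediately following the conjecture the authors write that ``both statements are non-trivial,'' that only partial verifications of (1) have been carried out (in \cite{bfr}, for the extremal cases $\Gamma=\mathbb Z_K^{*M}$, $\Gamma=\mathbb Z_K^M$, and some amalgams), and that for (2) ``we have no results here yet.'' There is therefore no proof in the paper against which your proposal can be compared.

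What you have written is not a proof but a research outline, and in that capacity it is broadly reasonable: you correctly invoke Theorem 6.5 to parametrize $X_G$ by tuples of quasi-flat unitaries, you correctly identify the free case $X_{\widehat{\mathbb Z_K^{*M}}}\simeq (U_K/T)^M$ as a homogeneous space, and you correctly flag that the very hypothesis ``uniform in some strong sense'' is part of what the conjecture leaves open. But the proposal does not supply the missing ingredients that would turn this into a proof: no candidate strong-uniformity hypothesis is put forward and shown to force transitivity of the residual symmetry group on the relation variety, and for (2) the sketch ``Schur orthogonality on $U_K/T$ plus cosemisimplicity'' does not by itself show that integration over $X_G$ reproduces $\int_{\widehat\Gamma}$ on arbitrary words in the generators, only on the degree-one coordinates.

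One small inaccuracy: the example $\mathbb Z_2\times\mathbb Z_2\subset S_6$ from Proposition 5.4 does \emph{not} satisfy Definition 6.3(3), because with $K=2$, $M=3$ there is no surjection $\mathbb Z_2^2\to\mathbb Z_2^3$ compatible with the generator assignment; so it does not show that the conditions of Definition 6.3 are insufficient. The paper's own motivation for strengthening the uniformity hypothesis comes rather from examples like $(\mathbb Z_K*\mathbb Z_K)\times\mathbb Z_K$, which is why condition (4) was added, and from the suggestion that conditions in the spirit of \cite{rwe} may ultimately be needed.
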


Here both statements are non-trivial. Some verifications of (1) were performed in \cite{bfr}, for certain basic classes of uniform groups, including the extremal cases $\Gamma=\mathbb Z_K^{*M}$ and $\Gamma=\mathbb Z_K^M$, and some amalgamation-theoretic variations of these examples. In general, however, all this looks quite non-trivial, and might actually need some stronger uniformity assumptions, as those used in \cite{rwe}. As for (2), we have no results here yet.

We believe that a good framework for such questions is the unitary easy quantum group setting \cite{twe}. Indeed, as explained in \cite{bb3}, associated to any intermediate easy quantum group $O_N\subset G\subset U_N^+$ is a certain ``noncommutative geometry'', which includes in particular a certain noncommutative torus $T=\widehat{\Gamma}$. Now when restricting the attention to the geometries which are ``hybrid'' in the sense of \cite{bb3}, in the sense that they are neither real, nor complex, the dual of the torus $\Gamma=\widehat{T}$ is uniform in the sense of Definition 6.3 above, and our feeling is that for this class of discrete groups, the conclusions of Conjecture 6.6 should hold as stated. We intend to discuss all this in a future paper.

Finally, an interesting problem, which would probably provide some good input for our various conjectures, is that of explicitly computing the universal quasi-flat models for the closed subgroups $G\subset S_4^+$. These subgroups, which are all coamenable, were fully classified in \cite{bb1}, and most of them can be investigated by using the above results. The examples which are not covered yet by our results consist in certain finite quantum groups, appearing as cocycle twists \cite{eva}, plus $O_2^{-1},SO_3^{-1}$, which can be probably investigated by using the fibers of the Pauli matrix representation \cite{bb1}, \cite{bco}, \cite{end}, \cite{joz}.


\begin{thebibliography}{99}

\bibitem{ade}N. Andruskiewitsch and J. Devoto, Extensions of Hopf algebras, {\em St. Petersburg Math. J.}  {\bf 7} (1996), 17--52.

\bibitem{ban}T. Banica, Quantum groups from stationary matrix models, {\em Colloq. Math.} {\bf 148} (2017), 247--267.

\bibitem{bb1}T. Banica and J. Bichon, Quantum groups acting on $4$ points, {\em J. Reine Angew. Math.} {\bf 626} (2009), 74--114.

\bibitem{bb2}T. Banica and J. Bichon, Matrix models for noncommutative algebraic manifolds, {\em J. Lond. Math. Soc.} {\bf 95} (2017), 519--540.

\bibitem{bb3}T. Banica and J. Bichon, Complex analogues of the half-classical geometry, {\em M\"unster J. Math.}, to appear.

\bibitem{bco}T. Banica and B. Collins, Integration over the Pauli quantum group, {\em J. Geom. Phys.} {\bf 58} (2008), 942--961.

\bibitem{bfr}T. Banica and A. Freslon, Modelling questions for quantum permutations, preprint 2017.

\bibitem{bne}T. Banica and I. Nechita, Flat matrix models for quantum permutation groups, {\em Adv. Appl. Math.} {\bf 83} (2017), 24--46.

\bibitem{bmt}E. B\'edos, G.J. Murphy and L. Tuset, Co-amenability of compact quantum groups, {\em J. Geom. Phys.} {\bf 40} (2001), 129--153.

\bibitem{bi1}J. Bichon, Algebraic quantum permutation groups, {\em Asian-Eur. J. Math.} {\bf 1} (2008), 1--13.

\bibitem{bi2}J. Bichon, Half-liberated real spheres and their subspaces, {\em Colloq. Math.} {\bf 144} (2016), 273--287.

\bibitem{bdu}J. Bichon and M. Dubois-Violette, Half-commutative orthogonal Hopf algebras, {\em Pacific J. Math.} {\bf 263} (2013), 13--28. 

\bibitem{bra}K. Bragiel, The twisted $SU(N)$ group. On the $C^*$-algebra $C(S_\mu U(N))$, {\em Lett. Math. Phys.} {\bf 20} (1990), 251--257.

\bibitem{bcv}M. Brannan, B. Collins and R. Vergnioux, The Connes embedding property for quantum group von Neumann algebras, {\em Trans. Amer. Math. Soc.} {\bf 369} (2017), 3799--3819. 

\bibitem{chi}A. Chirvasitu, Residually finite quantum group algebras, {\em J. Funct. Anal.} {\bf 268} (2015), 3508--3533.

\bibitem{chk}A. Chirvasitu, S.O. Hoche and P. Kasprzak, Fundamental isomorphism theorems for quantum groups, preprint 2016.

\bibitem{cdp}L.S. Cirio, A. D'Andrea, C. Pinzari and S. Rossi, Connected components of compact matrix quantum groups and finiteness conditions, {\em J. Funct. Anal.} {\bf 267} (2014), 3154--3204.

\bibitem{end}D. Enders, A characterization of semiprojectivity for subhomogeneous $C^*$-algebras, {\em Doc. Math.} {\bf 21} (2016), 987--1049.

\bibitem{eva}M. Enock and L. Vainerman, Deformation of a Kac algebra by an abelian subgroup, {\em Comm. Math. Phys.} {\bf 178} (1996), 571--596.

\bibitem{fim}P. Fima, On locally compact quantum groups whose algebras are factors, {\em J. Funct. Anal.} {\bf 244} (2007), 78--94. 

\bibitem{fsk}U. Franz and A. Skalski, On idempotent states on quantum groups, {\em J. Algebra} {\bf 322} (2009), 1774--1802.

\bibitem{joz}P. J\'oziak, Remark on Hopf images in quantum permutation groups $S_n^+$, preprint 2016.

\bibitem{kan}E. Kaniuth, Der Typ der regul\"aren Darstellung diskreter Gruppen, {\em Math. Ann.} {\bf 182} (1969), 334--339.

\bibitem{kta}H.F. Kreimer and M. Takeuchi, Hopf algebras and Galois extensions of an algebra, {\em Indiana Univ. Math. J.} {\bf 5} (1981), 675--692. 

\bibitem{mon}S. Montgomery, Hopf algebras and their actions on rings, AMS (1993).

\bibitem{mvo}F.J.  Murray and J. von Neumann, On rings of operators, {\em Ann. of Math.} {\bf 37} (1936), 116--229.

\bibitem{rwe}S. Raum and M. Weber, The full classification of orthogonal easy quantum groups, {\em Comm. Math. Phys.} {\bf 341} (2016), 751--779.

\bibitem{sch}H.J. Schneider, Principal homogeneous spaces for arbitrary Hopf algebras, {\em Israel J. Math.} {\bf 72} (1990), 167--195.  

\bibitem{smi}M. Smith, Regular representations of discrete groups, {\em J. Funct. Anal.} {\bf 11} (1972), 401--406.

\bibitem{tak}M. Takeuchi, Relative Hopf modules - equivalences and freeness criteria, {\em J. Algebra} {\bf 60} (1979), 452--471.  

\bibitem{twe}P. Tarrago and M. Weber, Unitary easy quantum groups: the free case and the group case, preprint 2015.

\bibitem{tho}E. Thoma, \"Uber unit\"are Darstellungen abz\"ahlbarer, diskreter Gruppen, {\em Math. Ann.} {\bf 153} (1964), 111--138.

\bibitem{vvo}R. Vergnioux and C. Voigt, The K-theory of free quantum groups, {\em Math. Ann.} {\bf 357} (2013), 355--400. 

\bibitem{von}J. von Neumann, On rings of operators. Reduction theory, {\em Ann. of Math.} {\bf 50} (1949), 401--485.

\bibitem{wa1}S. Wang, Free products of compact quantum groups, {\em Comm. Math. Phys.} {\bf 167} (1995), 671--692.

\bibitem{wa2}S. Wang, Quantum symmetry groups of finite spaces, {\em Comm. Math. Phys.} {\bf 195} (1998), 195--211.

\bibitem{wa3}S. Wang, $L_p$-improving convolution operators on finite quantum groups, {\em Indiana Univ. Math. J.} {\bf 65} (2016), 1609--1637.

\bibitem{wo1}S.L. Woronowicz, Compact matrix pseudogroups, {\em Comm. Math. Phys.} {\bf 111} (1987), 613--665.

\bibitem{wo2}S.L. Woronowicz, Tannaka-Krein duality for compact matrix pseudogroups. Twisted SU(N) groups, {\em Invent. Math.} {\bf 93} (1988), 35--76.

\bibitem{wo3}S.L. Woronowicz, Compact quantum groups, in ``Sym\'etries quantiques'', North-Holland (1998), 845--884.

\end{thebibliography}
\end{document}